\numberwithin{equation}{section}
\numberwithin{figure}{section}
\theoremstyle{plain}
\newtheorem{thm}{\protect\theoremname}[section]
  \theoremstyle{definition}
  \newtheorem{defn}[thm]{\protect\definitionname}
  \theoremstyle{plain}
  \newtheorem{prop}[thm]{\protect\propositionname}
  \theoremstyle{definition}
  \newtheorem{example}[thm]{\protect\examplename}
  \theoremstyle{remark}
  \newtheorem{rem}[thm]{\protect\remarkname}
  \theoremstyle{plain}
  \newtheorem{cor}[thm]{\protect\corollaryname}
  \theoremstyle{plain}
  \newtheorem{lem}[thm]{\protect\lemmaname}
  \providecommand{\corollaryname}{Corollary}
  \providecommand{\definitionname}{Definition}
  \providecommand{\examplename}{Example}
  \providecommand{\lemmaname}{Lemma}
  \providecommand{\propositionname}{Proposition}
  \providecommand{\remarkname}{Remark}
\providecommand{\theoremname}{Theorem}
\begin{document}
\author{Adrien Dubouloz}
\address{Institut de Math\'ematiques de Bourgogne, Universit\'e de Bourgogne, 9 avenue Alain Savary - BP 47870, 21078 Dijon cedex, France} 
\email{adrien.dubouloz@u-bourgogne.fr}

\author{Alvaro Liendo}  
\address{Instituto de Matem\'atica y F\'\i sica, Universidad de Talca, Casilla 721, Talca, Chile}
\email{aliendo@inst-mat.utalca.cl}

\subjclass[2000]{14E07; 14L30; 14M25; 14R20}

\thanks{This research was supported in part by ANR Grant \textquotedbl{}BirPol\textquotedbl{}
ANR-11-JS01-004-0 and  by Fondecyt project 11121151.}

\title{On rational additive group actions}
\begin{abstract}
We characterize rational actions of the additive group on algebraic
varieties defined over a field of characteristic zero in terms of
a suitable integrability property of their associated velocity vector
fields. This extends the classical correspondence between regular
actions of the additive group on affine algebraic varieties and the
so-called locally nilpotent derivations of their coordinate rings.
This leads in particular to a complete characterization of regular
additive group actions on semi-affine varieties in terms of their
associated vector fields. Among other applications, we review properties
of the rational counter-part of the Makar-Limanov invariant for affine
varieties and describe the structure of rational homogeneous additive
group actions on toric varieties.
\end{abstract}
\maketitle

\section*{Introduction}

During the last decades, the systematic study of regular actions of
the additive group $\mathbb{G}_{a}$ on affine varieties has provided
very useful and effective tools to understand the structure of certain
of these varieties, most particularly those which are very close to
complex affine spaces from a topological or differential point of
view. One key feature of these actions in characteristic zero is that
they are uniquely determined by their associated velocity vector fields%
\footnote{This is no longer the case in positive characteristic where one has
to keep track of appropriate infinite collections of higher order
differential operators, see e.g. \cite{Mi68}%
} which, in turn, admit a very simple, purely algebraic characterization.
Namely, a global vector field on an affine $k$-variety $X=\mathrm{Spec}(A)$
is the same as a $k$-derivation $\partial$ of $A$ into itself,
and derivations corresponding to additive group actions are precisely
those with the property that $A$ is the increasing union of the kernels
of the iterated $k$-linear operators $\partial^{n}$, $n\geq1$.
Derivations $\partial$ with this property are called \emph{locally
nilpotent} and the co-morphism $\mu^{*}:A\rightarrow A[t]$ of the
corresponding $\mathbb{G}_{a}$-action $\mu:\mathbb{G}_{a}\times X\rightarrow X$
on $X$ is recovered by formally taking the exponential map 
\[
\exp(t\partial):A\rightarrow A[[t]],\qquad f\mapsto\sum_{n}\frac{\partial^{n}(f)}{n!}t^{n},
\]
and observing that the local nilpotency of $\partial$ guarantees
precisely that the latter factors through the sub-ring $A[t]$ of
$A[[t]]$. 

The study of affine algebraic varieties from a geometry point of view
benefited a lot a from the rich algebraic theory of locally nilpotent
derivations and therefore, it is very desirable to push further this
fruitful approach to more general settings. One possible direction
consists in re-interpreting the property for a global derivation $\partial$
of a ring $A$ of being locally nilpotent as a kind of ``algebraic
integrability condition'' through the above exponential map construction.
So given an arbitrary algebraic $k$-variety $X$ with field of rational
functions $K_{X}$ and a rational vector field $\partial$ on $X$,
viewed as a $k$-derivation $\partial:K_{X}\rightarrow K_{X}$, we
can again define formally the exponential map

\[
\exp(t\partial):K_{X}\rightarrow K_{X}[[t]],\qquad f\mapsto\sum_{n}\frac{\partial^{n}(f)}{n!}t^{n},
\]
and asks for counter-parts in this context of the previous integrability
condition. The most natural one, which we call rational integrability
(Definition~\ref{def:rat-int}), is to require that the previous
map factors through the sub-algebra $K_{X}(t)\cap K_{X}[[t]]$ of
$K_{X}[[t]]$. Our first main result (Theorem \ref{prop:RatIntegrable})
shows that rationally integrable rational vector fields on a variety
$X$ are in one-to-one correspondence with rational $\mathbb{G}_{a}$-actions
$\mathbb{G}_{a}\times X\dashrightarrow X$ on $X$. This notion also
turns out to coincide with the abstract algebraic notion of locally
nilpotent derivation of a field extension $K/k$ given by Makar-Limanov
\cite{MLNotes}, with the additional advantage that rational integrability
can be checked directly on generators of the field $K$ over $k$. 

Being local in nature, the rational integrability condition is much
more flexible than the property of being locally nilpotent, and this
enables the possibility to study local and global additional conditions
ensuring that a rational $\mathbb{G}_{a}$-action is actually regular.
For instance, we obtain a complete characterization of regular $\mathbb{G}_{a}$-actions
on semi-affine varieties $X$ in terms of their associated velocity
vector fields, viewed as $k$-derivations $\tilde{\partial}:\mathcal{O}_{X}\rightarrow\mathcal{O}_{X}$
from the structure sheave of $X$ to itself. Namely, we establish
(Theorem~\ref{thm:Regular-actions}) that regular $\mathbb{G}_{a}$-actions
on $X$ are in one-to-one correspondence with $k$-derivations $\tilde{\partial}:\mathcal{O}_{X}\rightarrow\mathcal{O}_{X}$
for which the induced $k$-derivations $\partial:K_{X}\rightarrow K_{X}$
and $\Gamma(X,\tilde{\partial}):\Gamma(X,\mathcal{O}_{X})\rightarrow\Gamma(X,\mathcal{O}_{X})$
of the field of rational functions and the ring of global regular
functions on $X$, are respectively rationally integrable and locally
nilpotent. In the case were $X$ is not semi-affine, these two conditions
are in general no longer sufficient to characterize regular $\mathbb{G}_{a}$-actions.
Nevertheless, they guarantee, thanks to a general construction due
to Zaitsev \cite{Za}, the existence of a partial completion of $X$
on which the rational $\mathbb{G}_{a}$-action on $X$ given by $\partial$
extends to a regular action. 

The last section of the article contains three applications of these
notions. The first concerns a generalization to the rational context
of the Makar-Limanov invariant \cite{MLNotes} and of its behavior
under stabilization. In our second application we give a combinatorial
description of homogeneous rational $\mathbb{G}_{a}$-actions on toric
varieties from which we derive a more conceptual proof of a characterization
of regular homogeneous $\mathbb{G}_{a}$-actions on semi-affine toric
varieties due to Demazure \cite{De}. The last application consists
of a characterization of line bundle torsors in terms of rational
$\mathbb{G}_{a}$-actions.

\section{Basic results on rational actions of the additive group\label{sec:1}}

In what follows, the term variety refers to a separated geometrically
integral scheme of finite type over a fixed base field $k$ of characteristic
zero. We denote by $\overline{k}$ an algebraic closure of $k$. An
algebraic group over $k$ is a group object in the category of $k$-varieties.
In particular, every algebraic group $G$ in our sense is connected.
We denote by $e_{G}:\mathrm{Spec}(k)\rightarrow G$ the neutral element
of $G$ and by $\mathrm{m}_{G}:G\times G\rightarrow G$ the group
law morphism. 
\begin{defn}
A \emph{rational action} of an algebraic group $G$ on a variety $X$
is a rational map $\alpha:G\times X\dashrightarrow X$ such that the
following diagrams of rational maps commute

\begin{eqnarray}
\xymatrix@=4.5em{ G\times G\times X \ar@{-->}[r]^{\mathrm{id}_G\times \alpha} \ar[d]_{\mathrm{m}_G\times \mathrm{id}_X} & G\times X \ar@{-->}[d]^{\alpha} & & \mathrm{Spec}(k)\times X \ar[r]^{e_G\times \mathrm{id}_X} \ar[dr]_{\mathrm{pr}_2} & G\times X\ar@{-->}[d]^{\alpha} \\ G\times X \ar@{-->}[r]^{\alpha} & X & & & X.}
\label{eq:GeomDiagrams}
\end{eqnarray}We denote by $\mathrm{dom}(\alpha)$ the largest open subset of $G\times X$
on which $\alpha$ is defined and we say that $\alpha:G\times X\dashrightarrow X$
is defined at a point $(g,x)\in G\times X$ if the latter belongs
to $\mathrm{dom}(\alpha)$. If so, we denote $\alpha(g,x)$ simply
by $g\cdot x$. Remark that $\mathrm{dom}(\alpha)\cap(\{e_{G}\}\times X)$
is a non empty open subset of $\{e_{G}\}\times X$ \cite{De}. A rational
action $\alpha:G\times X\dashrightarrow X$ such that $\mathrm{dom}(\alpha)=G\times X$
is called \emph{regular}. 
\end{defn}
The conditions above mean equivalently that if $(g,x)$ and $(g',g\cdot x)$
belongs to $\mathrm{dom}(\alpha)$ then $(g'g,x)$ belongs to $\mathrm{dom}(\alpha)$
and $(g'g)\cdot x=g'\cdot(g\cdot x)$. Furthermore, if $(e_{G},x)\in\mathrm{dom}(\alpha)$
then $e_{G}\cdot x=x$. These can be rephrased more formally by saying
that rational actions of $G$ on $X$ correspond to homomorphisms
of group functors $G\rightarrow\mathrm{Bir}_{k}(X)$, where $\mathrm{Bir}_{k}(X)$
is the contravariant functor $(k\mbox{-}\mathrm{Varieties})\rightarrow(\mathrm{Groups})$
which associates to every $k$-variety $T$, the group of $T$-birational
maps $X\times T\dashrightarrow X\times T$. A rational action is regular
if and only the corresponding homomorphism $G\rightarrow\mathrm{Bir}_{k}(X)$
factors through the automorphism group functor $\mathrm{Aut}_{k}(X)$
of $X$.

\subsection{Criterion for existence of rational $\mathbb{G}_{a}$-actions}

A rational action $\alpha:\mathbb{G}_{a}\times X\dashrightarrow X$
of the additive group scheme $\mathbb{G}_{a}=\mathbb{G}_{a,k}=\mathrm{Spec}(k[t])$
on a $k$-variety $X$ with field of rational functions $K_{X}$ is
equivalently determined by a co-action homomorphism $\alpha^{*}:K_{X}\rightarrow K_{X}(t)=K_{X}\otimes_{k}k(t)$
of fields over $k$ which factors through the valuation ring $\mathcal{O}_{\nu_{0}}=\left\{ r(t)\in K_{X}(t)\mid\mathrm{ord}_{0}r(t)\geq0\right\} $
of $K_{X}(t)$ and such that the following diagrams commute \\
\begin{eqnarray} \xymatrix@=3.5em{ K_X \ar[r]^{\alpha^*} \ar[d]_{\alpha^*} & K_X(t) \ar[d]^{t\mapsto t+t'}   & K_X \ar[r]^-{\overline{\alpha^*}} \ar[dr]_{\mathrm{id}} & \mathcal{O}_{\nu_0}/t\mathcal{O}_{\nu_0} \\ K_X(t')=K_X\otimes_k k(t')\ar[r]^-{\alpha^*\otimes \mathrm{id}} & K_X(t)\otimes_k k(t')\simeq K_X(t,t')   & & K_X. \ar[u]}
\label{eq:AlgeDiagrams}
\end{eqnarray}
 Indeed, the condition that $\alpha^{*}$ factors through $\mathcal{O}_{\nu_{0}}$
is ensured by the fact that $\mathrm{dom}(\alpha)\cap(\{0\}\times X)$
is a non empty open subset of $\{0\}\times X$, and the commutativity
of the two diagrams expresses the usual axioms for a co-action. The
following characterization is well-known: 
\begin{prop}
\label{prop:CalculRusse} A $k$-variety $X$ admits a nontrivial
rational $\mathbb{G}_{a}$-action if and only if it is birationally
ruled, i.e., birationally isomorphic to $Y\times\mathbb{P}^{1}$ for
some $k$-variety $Y$. \end{prop}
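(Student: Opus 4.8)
The plan is to prove both directions of this equivalence, with the forward direction being the more delicate one.

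First I would handle the easier backward implication. Suppose $X$ is birationally ruled, so that there is a birational isomorphism $\phi\colon X\dashrightarrow Y\times\mathbb{P}^{1}$. It then suffices to produce a nontrivial rational $\mathbb{G}_{a}$-action on $Y\times\mathbb{P}^{1}$ and transport it along $\phi$, since a rational $\mathbb{G}_{a}$-action is a purely birational notion (it is determined by the co-action on the function field $K_{X}$). On $Y\times\mathbb{P}^{1}$ the most natural choice is the action induced by translation on the $\mathbb{P}^{1}$-factor: writing the affine coordinate on $\mathbb{P}^{1}$ as $x$, the standard regular action of $\mathbb{G}_{a}$ on $\mathbb{A}^{1}$ by $(s,x)\mapsto x+s$ extends to a rational action on $\mathbb{P}^{1}$ (undefined at the point at infinity) and hence, by taking the product with the identity on $Y$, to a nontrivial rational $\mathbb{G}_{a}$-action on $Y\times\mathbb{P}^{1}$. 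In terms of function fields this corresponds to the co-action $K_{Y}(x)\to K_{Y}(x)(t)$, $x\mapsto x+t$, which manifestly satisfies the co-action axioms expressed by the diagrams in \eqref{eq:AlgeDiagrams}.

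For the forward implication, suppose $X$ carries a nontrivial rational $\mathbb{G}_{a}$-action $\alpha$. The goal is to recover a free parameter on which $\mathbb{G}_{a}$ acts by translation and to identify the quotient. I would work at the level of the co-action homomorphism $\alpha^{*}\colon K_{X}\to K_{X}(t)$. The key is to locate an element $x\in K_{X}$ which is a \emph{slice}, meaning $\alpha^{*}(x)=x+t$; such an element behaves like the affine coordinate on the $\mathbb{P}^{1}$-factor. Nontriviality of the action guarantees that $\alpha^{*}$ is not the trivial inclusion $K_{X}\hookrightarrow K_{X}(t)$, so there exists some $f\in K_{X}$ with $\alpha^{*}(f)\neq f$; the co-action axioms then force $\alpha^{*}(f)$ to be a polynomial in $t$ of positive degree with coefficients in $K_{X}$, and differentiating an appropriate such expression with respect to $t$ and evaluating produces an element whose image under $\alpha^{*}$ is linear in $t$. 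After normalizing one obtains the desired slice $x$ with $\alpha^{*}(x)=x+t$.

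Once a slice $x$ is in hand, I would set $L=\ker\bigl(\partial\bigr)$, the field of invariants, where $\partial$ is the velocity derivation, i.e. the coefficient of $t$ in the expansion of $\alpha^{*}-\mathrm{id}$; equivalently $L$ is the subfield of $K_{X}$ fixed by the action. The existence of the slice $x$ with $\partial(x)=1$ shows, by the standard argument underlying the Slice Theorem, that $K_{X}=L(x)$ with $x$ transcendental over $L$: every $f\in K_{X}$ can be written uniquely as a rational expression in $x$ over $L$ by using $\exp(-x\partial)$ to project onto invariants. Choosing a model $Y$ with function field $K_{Y}=L$, this identification $K_{X}\simeq K_{Y}(x)=K_{Y\times\mathbb{P}^{1}}$ exhibits a birational isomorphism $X\dashrightarrow Y\times\mathbb{P}^{1}$, proving that $X$ is birationally ruled. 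The main obstacle I anticipate is the construction of the slice: one must check carefully, using the cocycle condition $\alpha^{*}\otimes\mathrm{id}\circ\alpha^{*}=(t\mapsto t+t')\circ\alpha^{*}$, that the image $\alpha^{*}(f)$ is genuinely polynomial in $t$ (so that the notion of leading term and the derivative argument make sense) and that normalization to obtain $\partial(x)=1$ can be performed within $K_{X}$ rather than a proper extension.
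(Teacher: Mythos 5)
Your backward direction is fine and matches the paper. The forward direction, however, contains a genuine gap at its central step: you claim that ``the co-action axioms force $\alpha^{*}(f)$ to be a polynomial in $t$ of positive degree with coefficients in $K_{X}$.'' This is false. The co-action axioms only force $\alpha^{*}(f)$ to lie in $K_{X}(t)\cap\mathcal{O}_{\nu_{0}}$, i.e.\ to be a rational function of $t$ regular at $t=0$. A concrete counterexample appears in the paper itself: for the rational action on $\mathbb{A}^{1}$ with $K_{X}=k(x)$ given by $\alpha^{*}(x)=x/(1+tx)$ (the restriction of the projective translation action on $\mathbb{P}^{1}$), the image of $x$ is genuinely non-polynomial, yet all axioms hold. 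This is exactly the difficulty the paper's proof (following Koshevoi) is built to handle: it writes $\alpha^{*}(f)=(1+b(t))^{-1}a(t)$ with $a(t)=\sum a_{i}t^{i}$, $b(t)\in tK_{X}[t]$, extracts coefficient identities from the cocycle condition applied to this quotient, and produces the slice as a ratio of denominator coefficients, namely $s=a_{n}^{-1}b_{m-1}/(m\,a_{n}^{-1}b_{m})$. Your ``differentiate and evaluate'' recipe works only in the polynomial case, which you cannot reduce to.

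The second half of your argument has a related gap. A nonzero derivation of a field is never locally nilpotent (every unit lies in the kernel of a locally nilpotent derivation), so ``the standard argument underlying the Slice Theorem'' via $\exp(-x\partial)$ does not apply: for $f\notin\bigcup_{n}\mathrm{Ker}\,\partial^{n}$ the series $\exp(-x\partial)(f)$ is infinite, and the corresponding substitution $t=-x$ in $\alpha^{*}(f)$ can hit a pole (in the example above, with slice $s=x^{-1}$, the denominator $1+tx$ vanishes at $t=-s$). Moreover, the mere existence of a slice does not imply $K_{X}=\mathrm{Ker}(\partial)(x)$ for field derivations: the derivation $\partial=\frac{\partial}{\partial x}+y\frac{\partial}{\partial y}$ of $k(x,y)$ has slice $x$ but rational kernel $k$, so the decomposition fails there; what saves the situation in the present setting is the full co-action structure (equivalently, rational integrability), not the slice alone. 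The paper closes this gap by an explicit induction showing $a_{n}^{-1}b_{i}\in K_{0}[s]$ and, applying the same computation to $f^{-1}$, that $f^{-1}b_{m}^{-1}a_{i}\in K_{0}[s]$ with $b_{m}^{-1}a_{n}\in K_{0}$, whence $f\in K_{0}(s)$. You would need to supply an argument of this kind (or invoke Rosenlicht's theorem, as the paper notes is possible) for both the construction of the slice and the equality $K_{X}=K_{0}(s)$.
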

\begin{proof}
Every $k$-variety of the form $Y\times\mathbb{P}^{1}$ admits a regular
$\mathbb{G}_{a}$-action by projective translation on the second factor.
The converse follows for instance from Rosenlicht Theorem \cite{Ro56}
which asserts for our purpose that a $k$-variety equipped with a
rational $\mathbb{G}_{a}$-action is $\mathbb{G}_{a}$-equivariantly
birationally isomorphic to $U\times\mathbb{G}_{a}$ on which $\mathbb{G}_{a}$
acts by translations on the second factor for some affine $k$-variety
$U$. Nevertheless we find more enlightening to give an elementary
proof borrowed from Koshevoi \cite{Ko67}. Suppose that $\alpha:\mathbb{G}_{a}\times X\dashrightarrow X$
is a nontrivial rational $\mathbb{G}_{a}$-action and let $K_{0}=K_{X}^{\mathbb{G}_{a}}=\left\{ h\in K_{X}\mid\alpha^{*}h=h\right\} $
be its field of invariants. It is enough to show that there exists
$s\in K_{X}\setminus K_{0}$ such that $\alpha^{*}s=s+t$ and $K_{X}=K_{0}(s)$.
Note that if such an element $s$ exists, then it is transcendental
over $K_{0}$ for otherwise, applying $\alpha^{*}$ to a nontrivial
polynomial relation $P(s)=0$ for some $P\in K_{0}[v]$ would render
the conclusion that $t\in K_{X}(t)$ is algebraic over $K_{0}(s)$
whence over $K_{X}$, which is absurd. Furthermore, since any two
elements $s_{i}$, $i=1,2$, such that $\alpha^{*}s_{i}=s_{i}+t$
differs only by the addition of an element in $K_{0}$, it is enough
to show that for every $f\in K_{X}\setminus K_{0}$ there exists $s\in K_{X}$
such that $\alpha^{*}s=s+t$ and $f\in K_{0}(s)$. 

Now since $\alpha$ is nontrivial, there exists $f\in K_{X}\setminus K_{0}$
and $\alpha^{*}f$ can be written in the form $\alpha^{*}(f)=\left(1+b(t)\right)^{-1}a(t)$
where $a(t)=\sum_{i=0}^{n}a_{i}t^{i}\in K[t]$ with $a_{0}=f$, $b(t)=\sum_{i=1}^{m}b_{i}t^{i}\in tK[t]$,
and either $a(t)$ or $1+b(t)$ is nonconstant. The commutativity
of the first diagram \ref{eq:AlgeDiagrams} above implies that 
\begin{eqnarray*}
\left(1+\sum_{i=1}^{m}\alpha^{*}(b_{i})(t')^{i}\right)^{-1}\left(\sum_{i=0}^{n}\alpha^{*}(a_{i})(t')^{i}\right) & = & \left(1+\sum_{i=1}^{m}b_{i}(t+t')^{i}\right)^{-1}\left(\sum_{i=0}^{n}a_{i}(t+t')^{i}\right)\\
 & = & \left(1+\sum_{i=1}^{m}b_{i}t^{i}+\sum_{i=1}^{m}b_{1,i}(t)(t')^{i}\right)^{-1}\left(\sum_{i=0}^{n}a_{1,i}(t)(t')^{i}\right)\\
 & = & \left(1+\sum_{i=1}^{m}\frac{b_{1,i}(t)}{1+\sum_{i=1}^{m}b_{i}t^{i}}(t')^{i}\right)^{-1}\left(\sum_{i=0}^{n}\frac{a_{1,i}(t)}{1+\sum b_{i}t^{i}}(t')^{i}\right)
\end{eqnarray*}
where $a_{1,i}(t)=\sum_{j=i}^{n}\binom{j}{j-i}a_{j}t^{j-i}$ and $b_{1,i}(t)=\sum_{j=i}^{m}\binom{j}{j-i}b_{j}t^{j-i}$.
Identifying the coefficients, we obtain 
\begin{eqnarray*}
\alpha^{*}(a_{j})=\frac{a_{1,j}(t)}{1+\sum_{i=1}^{m}b_{i}t^{i}} & \textrm{and } & \alpha^{*}(b_{j})=\frac{b_{1,j}(t)}{1+\sum_{i=1}^{m}b_{i}t^{i}}.
\end{eqnarray*}
In particular, $\alpha^{*}(a_{n}^{-1})=(a_{n}^{-1}+\sum_{i=1}^{m}a_{n}^{-1}b_{i}t^{i})\in K[t]$
and, re-using the axioms to get the equality 
\[
\alpha^{*}a_{n}^{-1}+\sum_{i=1}^{m}\alpha^{*}(a_{n}^{-1}b_{i})(t')^{i}=a_{n}^{-1}+\sum_{i=1}^{m}a_{n}^{-1}b_{i}(t+t')^{i},
\]
we deduce that $\alpha^{*}(a_{n}^{-1}b_{i})=a_{n}^{-1}\sum_{j=i}^{m}\binom{j}{i}b_{j}t^{j-i}$
for every $i=1,\ldots m$. Thus $a_{n}^{-1}b_{m}\in K_{0}$, $\alpha^{*}(a_{n}^{-1}b_{m-1})=a_{n}^{-1}b_{m-1}+ma_{n}^{-1}b_{m}t$
and so, letting $s=\frac{a_{n}^{-1}b_{m-1}}{ma_{n}^{-1}b_{m}}$ we
obtain that $\alpha^{*}s=s+t$. We further deduce by induction that
$a_{n}^{-1}b_{i}\in K_{0}[s]$ for every $i=1,\ldots,m$. The same
argument applied to $f^{-1}$ implies that $s'=\frac{a_{n-1}b_{m}^{-1}}{na_{n}b_{m}^{-1}}$
also satisfies $\alpha^{*}s'=s'+t$ and that $f^{-1}b_{m}^{-1}a_{i}\in K_{0}[s']=K_{0}[s]$
for every $i=1,\ldots,n$. Since $b_{m}^{-1}a_{n}\in K_{0}$, this
shows that $f\in K_{0}(s)$ as desired. 
\end{proof}
The proof above shows more precisely that for every nontrivial rational
$\mathbb{G}_{a}$-action $\alpha:\mathbb{G}_{a}\times X\dashrightarrow X$
there exists a decomposition $K_{X}=K_{X}^{\mathbb{G}_{a}}(s)$, where
$K_{X}^{\mathbb{G}_{a}}$ is the field of invariant and where $s$
is an element transcendental over $K_{X}^{\mathbb{G}_{a}}$ satisfying
$\alpha^{*}s=s+t$, for which $\alpha^{*}$ takes the form 
\begin{equation}
\alpha^{*}=\alpha_{(K_{X}^{\mathbb{G}_{a}},s)}^{*}:K_{X}=K_{X}^{\mathbb{G}_{a}}(s)\rightarrow K_{X}^{\mathbb{G}_{a}}(s)(t),\; f(s)\mapsto\alpha_{(K_{X}^{\mathbb{G}_{a}},s)}^{*}(f(s))=f(s+t).\label{eq:StandardForm}
\end{equation}
An element $s\in K_{X}$ with the above properties is called a \emph{rational
slice} for the action $\alpha$. 
\begin{example}
A smooth curve $C$ admits a rational $\mathbb{G}_{a,k}$-action if
and only it is birational to $\mathbb{P}_{K_{0}}^{1}$ for a certain
algebraic extension $K_{0}$ of $k$. Indeed, by Proposition \ref{prop:CalculRusse}
above, $C$ admits a rational $\mathbb{G}_{a,k}$-action if and only
if $K_{C}=K_{0}(s)$ for some element $s$ transcendental over $K_{0}$.
This implies that $K_{0}$ is an algebraic extension of $k$ and that
$C\stackrel{\sim}{\dashrightarrow}\mathbb{P}_{K_{0}}^{1}$.
\end{example}

\subsection{Rational $\mathbb{G}_{a}$-actions and rational vector fields\label{sub:Rat-vectorFields}}

Every rational $\mathbb{G}_{a}$-action $\alpha:\mathbb{G}_{a}\times X\dashrightarrow X$
on a $k$-variety $X$ gives rise to a \emph{rational vector field},
i.e. a $k$-derivation $\tilde{\partial}:\mathcal{O}_{X}\rightarrow\mathcal{K}_{X}$
from the structure sheaf $\mathcal{O}_{X}$ to the constant sheaf
$\mathcal{K}_{X}$ of rational functions on $X$, consisting of velocity
vectors along germs of general orbits. More precisely, $\alpha$ induces
a rational homomorphism of sheaves 
\[
\eta:\alpha^{*}\Omega_{X/k}^{1}\rightarrow\Omega_{\mathbb{G}_{a}\times X/k}^{1}\rightarrow\Omega_{\mathbb{G}_{a}\times X/X}^{1}
\]
on $\mathbb{G}_{a}\times X$, where $\Omega_{\mathbb{G}_{a}\times X/X}^{1}$
is the sheaf of relative differentials of the second projection $\mathrm{pr}_{X}:\mathbb{G}_{a}\times X\rightarrow X$.
Pulling back by the zero section morphism $e_{X}:X\rightarrow\mathbb{G}_{a}\times X$,
$x\mapsto(0,x)$, whose image intersects $\mathrm{dom}(\alpha)$ by
definition, we obtain a global section $e_{X}^{*}\eta:e_{X}^{*}\alpha^{*}\Omega_{X/k}^{1}\simeq\Omega_{X/k}^{1}\rightarrow e_{X}^{*}\Omega_{\mathbb{G}_{a}\times X/X}^{1}\simeq\mathcal{O}_{X}$
of the sheaf $\mathcal{H}om_{X}(\Omega_{X/k}^{1},\mathcal{O}_{X})\otimes\mathcal{K}_{X}$,
hence by composition with the canonical $k$-derivation $d:\mathcal{O}_{X}\rightarrow\Omega_{X/k}^{1}$,
a $k$-derivation $\tilde{\partial}:\mathcal{O}_{X}\rightarrow\mathcal{K}_{X}$.
Furthermore, we can extend this derivation via the Leibniz rule to
a $k$-derivation from $\mathcal{K}_{X}$ to $\mathcal{K}_{X}$. We
denote this derivation with the same symbol $\tilde{\partial}:\mathcal{K}_{X}\rightarrow\mathcal{K}_{X}$. 

If the $\mathbb{G}_{a}$-action $\alpha$ is regular, then $\eta:\alpha^{*}\Omega_{X/k}^{1}\rightarrow\Omega_{\mathbb{G}_{a}\times X/X}^{1}$
is regular homomorphism, giving rise to global section $e_{X}^{*}\eta$
of $\mathcal{H}om_{X}(\Omega_{X/k}^{1},\mathcal{O}_{X})$, for which
the corresponding derivation $\tilde{\partial}:\mathcal{O}_{X}\rightarrow\mathcal{K}_{X}$
factors through $\mathcal{O}_{X}$. In the case of a regular $\mathbb{G}_{a}$-action
$\alpha:\mathbb{G}_{a}\times X\rightarrow X$ on a affine variety
$X=\mathrm{Spec}(A)$, the $k$-derivation $\partial=\Gamma(X,\tilde{\partial})\in\mathrm{Der}_{k}(A)$
deduced from $\tilde{\partial}:\mathcal{O}_{X}\rightarrow\mathcal{O}_{X}$
coincides simply with the one ${\displaystyle \partial=}\frac{d}{dt}\mid_{t=0}\circ\alpha^{*}:A\rightarrow A[t]/tA[t]\simeq A$.
It is well-know (see e.g. \cite{MLNotes}) that a $k$-derivation
$\partial\in\mathrm{Der}_{k}(A)$ arises from a regular $\mathbb{G}_{a}$-action
on $X$ if and only if it is ``algebraically integrable'' in the
sense that the formal exponential homomorphism $\exp(t\partial):A\rightarrow A[[t]]$
factors through a homomorphism $\alpha^{*}:A\rightarrow A[t]\subset A[[t]]$.
This holds precisely when $A=\bigcup_{n\geq1}\mathrm{Ker}\partial^{n}$,
and derivations with this property are called \emph{locally nilpotent}. 

Being locally nilpotent is not a local property in the Zariski topology
since for instance the restriction of a locally nilpotent derivation
to a non $\mathbb{G}_{a}$-stable affine open subset of $X$ is no
longer locally nilpotent (see example \ref{ex:bad-restriction-1}
below). In contrast, the following weaker form of the algebraic integrability
condition behaves well under localization:
\begin{defn}
\label{def:rat-int}A $k$-derivation $\tilde{\partial}:\mathcal{K}_{X}\rightarrow\mathcal{K}_{X}$
on a variety $X$ is called \emph{rationally integrable }if the formal
exponential homomorphism 
\[
\exp(t\tilde{\partial}):\mathcal{K}_{X}\rightarrow\mathcal{K}_{X}[[t]],\quad f\mapsto\sum\frac{\tilde{\partial}^{n}f}{n!}t^{n}
\]
factors through $\mathcal{K}_{X}(t)\cap\mathcal{K}_{X}[[t]]$. 
\end{defn}
By definition, every rationally integrable $k$-derivation $\tilde{\partial}:\mathcal{K}_{X}\rightarrow\mathcal{K}_{X}$
induces a global rational $k$-derivation $\partial=\Gamma(X,\tilde{\partial}):K_{X}\rightarrow K_{X}$
which gives rise in turn to a homomorphism $\alpha^{*}=\exp(t\partial):K_{X}\rightarrow K_{X}(t)$
factoring through $\mathcal{O}_{\nu_{0}}$ and satisfying the axioms
of a rational co-action of $\mathbb{G}_{a}$. Conversely, for every
rational $\mathbb{G}_{a}$-action $\alpha:\mathbb{G}_{a}\times X\dashrightarrow X$
with associated co-morphism $\alpha^{*}:K_{X}\rightarrow K_{X}(t)$,
the fact that $\alpha^{*}$ factors through $\mathcal{O}_{\nu_{0}}$
guarantees that the $k$-linear homomorphism 
\begin{equation}
\partial=\overline{\frac{d}{dt}\circ\alpha^{*}}:K_{X}\rightarrow\mathcal{O}_{\nu_{0}}\stackrel{\frac{d}{dt}}{\rightarrow}\mathcal{O}_{\nu_{0}}\rightarrow\mathcal{O}_{\nu_{0}}/t\mathcal{O}_{\nu_{0}}\simeq K_{X}\label{eq:def-derivation}
\end{equation}
is well-defined and the commutativity of the second diagram \ref{eq:AlgeDiagrams}
above implies that $\partial$ is a $k$-derivation. In fact, if we
write $K_{X}=K_{X}^{\mathbb{G}_{a}}(s)$ for a suitable rational slice
$s$ in such a way that $\alpha^{*}$ takes the form $\alpha_{(K_{X}^{\mathbb{G}_{a}},s)}^{*}$
as in (\ref{eq:StandardForm}) above, then $\partial$ coincides with
the $k$-derivation $\frac{\partial}{\partial s}:K_{X}^{\mathbb{G}_{a}}(s)\rightarrow K_{X}^{\mathbb{G}_{a}}(s)$.
We deduce in turn from Taylor's formula that 
\[
\exp(t\partial)(f(s))=\sum\frac{t^{n}}{n!}\frac{\partial^{n}}{\partial s^{n}}f(s)=f(s+t)=\alpha^{*}(f(s)).
\]
Summing up, we obtain the following characterization: 
\begin{thm}
\label{prop:RatIntegrable} There exists a one-to-one correspondence
between rational $\mathbb{G}_{a}$-actions $\alpha:\mathbb{G}_{a}\times X\dashrightarrow X$
on a $k$-variety $X$ and rationally integrable $k$-derivations
$\tilde{\partial}:\mathcal{K}_{X}\rightarrow\mathcal{K}_{X}$. 
\end{thm}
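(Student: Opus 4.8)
The plan is to exhibit the correspondence as a pair of mutually inverse assignments and to isolate the single non-formal ingredient. Since $X$ is integral, the sheaf $\mathcal{K}_X$ is the constant sheaf with value the function field $K_X$, so a $k$-derivation $\tilde{\partial}\colon\mathcal{K}_X\to\mathcal{K}_X$ is the same datum as a $k$-derivation $\partial\colon K_X\to K_X$ of the field of rational functions, and rational integrability of $\tilde{\partial}$ is by definition the requirement that $\exp(t\partial)$ factor through $K_X(t)\cap K_X[[t]]$. I may therefore work throughout with derivations of $K_X$ and with co-action homomorphisms $\alpha^*\colon K_X\to K_X(t)$, using the identification of rational $\mathbb{G}_a$-actions with such co-morphisms recalled before the statement.

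Starting from a rationally integrable $\partial$, I would attach the action whose co-morphism is $\alpha^*=\exp(t\partial)$, and verify the three conditions for a rational co-action. First, the constant term of $\exp(t\partial)(f)$ is $f$, so $\mathrm{ord}_0\,\exp(t\partial)(f)\ge 0$ and $\alpha^*$ factors through $\mathcal{O}_{\nu_0}$; the same observation gives $\overline{\alpha^*}=\mathrm{id}$, which is the right-hand diagram of \ref{eq:AlgeDiagrams}. The remaining left-hand diagram is the usual functional equation for the exponential, which I would check by comparing coefficients: both composites send $f$ to $\sum_{m,n\ge 0}\frac{\partial^{m+n}f}{m!\,n!}\,t^m(t')^n$, the one via the binomial expansion of $(t+t')^k$ and the other by reindexing the double sum. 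This produces a rational $\mathbb{G}_a$-action, and rational integrability is precisely what guarantees that $\alpha^*$ lands in $K_X(t)$.

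Conversely, from an action $\alpha$ I would take the derivation $\partial=\overline{\frac{d}{dt}\circ\alpha^*}$ of \ref{eq:def-derivation}, already shown before the statement to be a well-defined $k$-derivation of $K_X$. Writing $K_X=K_X^{\mathbb{G}_a}(s)$ for a rational slice so that $\alpha^*$ takes the standard form \ref{eq:StandardForm}, one has $\partial=\partial/\partial s$ and, by Taylor's formula, $\exp(t\partial)(f(s))=f(s+t)=\alpha^*(f(s))$. The outer equality shows $\exp(t\partial)=\alpha^*$, so $\exp(t\partial)$ takes values in $K_X(t)$ and $\partial$ is rationally integrable; it also shows that feeding $\partial$ into the first assignment returns the original $\alpha$. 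For the reverse composite I would start from a rationally integrable $\partial$, set $\alpha^*=\exp(t\partial)$, and compute $\frac{d}{dt}\exp(t\partial)(f)=\exp(t\partial)(\partial f)$, so that reduction modulo $t$ recovers $\partial f$ and $\overline{\frac{d}{dt}\circ\alpha^*}=\partial$. The two assignments are thus mutually inverse, the trivial action corresponding to $\partial=0$ being handled directly.

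The one genuinely non-formal step is the claim, in the passage from an action to a derivation, that $\exp(t\partial)(f)$ is a \emph{rational} function of $t$ rather than merely a formal power series. This is exactly where Proposition \ref{prop:CalculRusse} and the existence of a rational slice enter: once $\alpha^*$ is put in the form $f(s)\mapsto f(s+t)$, rationality in $t$ is automatic, since substituting $s+t$ for $s$ preserves $K_X^{\mathbb{G}_a}(s)(t)=K_X(t)$. Everything else reduces to the formal calculus of the exponential map, so I expect this slice reduction to be the crux of the argument.
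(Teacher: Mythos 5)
Your proposal is correct and follows essentially the same route as the paper's own argument: the formal exponential $\exp(t\partial)$ in one direction, the derivation $\partial=\overline{\frac{d}{dt}\circ\alpha^{*}}$ of (\ref{eq:def-derivation}) in the other, with the rational slice decomposition of Proposition \ref{prop:CalculRusse} and Taylor's formula $\exp(t\partial)(f(s))=f(s+t)=\alpha^{*}(f(s))$ supplying exactly the non-formal step you identify as the crux. You merely spell out details the paper leaves implicit (verification of the co-action axioms by the double-sum computation, both composites of the correspondence, and the trivial action as a separate case), which is harmless.
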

For a rational $\mathbb{G}_{a}$-action $\alpha:\mathbb{G}_{a}\times X\dashrightarrow X$
associated with a rationally integrable $k$-derivation $\partial=\Gamma(X,\tilde{\partial}):K_{X}\rightarrow K_{X}$,
the field of invariants $K_{X}^{\mathbb{G}_{a}}$ is equal to the
kernel $\mathrm{Ker}\partial$ of $\partial$ while rational slices
for $\alpha$ coincides precisely with elements $s\in K_{X}$ such
that $\partial s=1$. 
\begin{rem}
In \cite{MLNotes}, a $k$-derivation $\partial:K\rightarrow K$ of
a field extension $K/k$ is called locally nilpotent if $K$ is equal
to the field of fractions of its sub-ring $\mathrm{Nil}(\partial)=\bigcup_{n\geq0}\mathrm{Ker}\partial^{n}$.
In the case where $K=K_{X}$ is the field of rational functions on
a $k$-variety $X$, this property turns out to be equivalent to the
rational integrability of the associated derivation $\partial:\mathcal{K}_{X}\rightarrow\mathcal{K}_{X}$.
Indeed, by virtue of \cite[Lemma 2 p. 13]{MLNotes} and Proposition
\ref{prop:CalculRusse} above the two notions are both equivalent
to the property that $K_{X}$ is a purely transcendental extension
of its subfield $\mathrm{Ker}\partial$. The formulation in terms
of rational integrability has the advantage to be easier to check
in practice: by definition, if $K_{X}=k(f_{1},\ldots,f_{n})$ then
a $k$-derivation $\partial:K_{X}\rightarrow K_{X}$ is rationally
integrable if and only if $\exp(t\partial)(f_{i})\in K_{X}(t)$ for
every $i=1,\ldots,n$. \end{rem}
\begin{example}
\label{ex:bad-restriction-1}Let $\tilde{\partial}:\mathcal{O}_{\mathbb{A}^{1}}\rightarrow\mathcal{O}_{\mathbb{A}^{1}}$
be the $k$-derivation associated with the regular action of $\mathbb{G}_{a}$
on $\mathbb{A}^{1}=\mathrm{Spec}(k[x])$ by translations. Then $\Gamma(\mathbb{A}^{1},\tilde{\partial})=\frac{\partial}{\partial x}$
is a locally nilpotent derivation of $k[x]$. On the other hand, for
every non constant polynomial $p\in k[x]$, the $k$-derivation of
$k[x]_{p(x)}$ induced by $\tilde{\partial}$ is rationally integrable
but not locally nilpotent, defining a rational $\mathbb{G}_{a}$-action
of the principal open subset $U_{p}=\mathrm{Spec}(k[x]_{p(x)})$ of
$\mathbb{A}^{1}$. 
\end{example}
\noindent
\begin{example}
\label{ex:Bad-restriction-2} The derivation $\partial=-x^{2}\frac{\partial}{\partial x}:k[x]\rightarrow k[x]$
is not locally nilpotent. However, the equality 
\[
\exp(t\partial)(x)=\sum_{n=0}^{\infty}\frac{\partial^{n}x}{n!}t^{n}=\sum_{n=0}^{\infty}(-1)^{n}x^{n+1}t^{n}=\frac{x}{1+tx}
\]
in $k(t)[[x]]$ implies that the induced derivation of $k(x)$ is
rationally integrable with $s=x^{-1}$ as a slice, and hence defines
a rational $\mathbb{G}_{a}$-action $\alpha:\mathbb{G}_{a}\times\mathbb{A}^{1}\dashrightarrow\mathbb{A}^{1}$
on $\mathbb{A}^{1}=\mathrm{Spec}(k[x])$. In fact, $\alpha$ coincides
simply with the restriction to the open subset $\mathbb{P}^{1}\setminus\{\left[1:0\right]\}$
of $\mathbb{P}^{1}=\mathrm{Proj}(k[u,v])$ of the regular $\mathbb{G}_{a}$-action
$t\cdot[u:v]=[u:v+tu]$.
\end{example}
In the examples above, the derivation $\tilde{\partial}:\mathcal{O}_{X}\rightarrow\mathcal{K}_{X}$
factors through $\mathcal{O}_{X}$, in other words, the a priori rational
vector field is in fact regular. The following examples illustrate
the situation where the $\mathbb{G}_{a}$-action is induced by genuinely
rational vector fields. 
\begin{example}
The $k$-derivation $\partial=x^{-1}\frac{\partial}{\partial y}:k(x,y)\rightarrow k(x,y)$
is rationally integrable and its associated rational $\mathbb{G}_{a}$-action
$\alpha:\mathbb{G}_{a}\times X\dashrightarrow X$, $(x,y)\mapsto\left(x,y+\frac{t}{x}\right)$
restricts to a regular one on the open subset $U=X_{x}=\mathrm{Spec}(k[x^{\pm1},y])$
where $\partial$ is actually locally nilpotent. But $\mathrm{dom}(\alpha)\cap(\{0\}\times X)=\{0\}\times U$
and in fact, $(t,p)\notin\mathrm{dom}(\alpha)$ for all $p\in X\setminus U$
and $t\in\mathbb{G}_{a}$.
\end{example}
\noindent
\begin{example}
\label{ex:affine-plane} Let $X=\mathbb{A}^{2}=\mathrm{Spec}(k[x,y])$.
By virtue of Proposition \ref{prop:CalculRusse} and Theorem \ref{prop:RatIntegrable},
a $k$-derivation $\partial:k(x,y)\rightarrow k(x,y)$ is rationally
integrable if and only if there exists an element $y_{0}\in k(x,y)$
purely transcendental over $K_{0}=\mathrm{Ker}\partial$ such that
$\partial(y_{0})=1$ and an isomorphism $k(x,y)\simeq K_{0}(y_{0})$.
By Luröth theorem, $K_{0}$ is itself purely transcendental over $k$,
say $K_{0}=k(x_{0})$ for some $x_{0}\in k(x,y)$. In other words,
we obtain the rational counterpart of a classical result of Rentschler
\cite{Re68} which asserts that up to a biregular coordinate change
on $\mathbb{A}^{2}$, every locally nilpotent $k$-derivation of $k[x,y]$
has the form $\partial=r(x)\frac{\partial}{\partial y}$ for some
polynomial $r(x)\in k[x]$, namely: up to a birational coordinate
change on $\mathbb{A}^{2}$, i.e. a $k$-automorphism of $k(x,y)$,
every rationally integrable $k$-derivation takes the form $\partial=r(x)\frac{\partial}{\partial y}$
for some some rational function $r(x)\in k(x)$. 
\end{example}

\section{Regular actions of the additive group on semi-affine varieties\label{sec:2}}

Recall that a $k$-variety $X$ is called semi-affine if the canonical
morphism $p:X\rightarrow X_{0}=\mathrm{Spec}(\Gamma(X,\mathcal{O}_{X}))$
is proper. In this case $\Gamma(X,\mathcal{O}_{X})$ is finitely generated
and so $X_{0}$ is an affine variety \cite[Corollary~3.6]{GL}. For
instance, complete or affine $k$-varieties are semi-affine. By the
previous subsection, every regular $\mathbb{G}_{a}$-action $\alpha:\mathbb{G}_{a}\times X\rightarrow X$
on a $k$-variety $X$ gives rise to a rationally integrable $k$-derivation
$\tilde{\partial}:\mathcal{O}_{X}\rightarrow\mathcal{O}_{X}$. Conversely,
the following theorem shows that in the case where $X$ is semi-affine,
a rationally integrable derivation $\tilde{\partial}:\mathcal{O}_{X}\rightarrow\mathcal{O}_{X}$
corresponds to a regular $\mathbb{G}_{a}$-action if and only if the
associated global $k$-derivation $\Gamma(X,\tilde{\partial}):\Gamma(X,\mathcal{O}_{X})\rightarrow\Gamma(X,\mathcal{O}_{X})$
is locally nilpotent. 
\begin{thm}
\label{thm:Regular-actions}Regular $\mathbb{G}_{a}$-actions on a
semi-affine variety $X$ are in one-to-one correspondence with rationally
integrable $k$-derivations $\tilde{\partial}:\mathcal{O}_{X}\rightarrow\mathcal{O}_{X}$
such that the derivation $\Gamma(X,\tilde{\partial}):\Gamma(X,\mathcal{O}_{X})\rightarrow\Gamma(X,\mathcal{O}_{X})$
on the ring of global regular functions is locally nilpotent.\end{thm}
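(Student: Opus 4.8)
The plan is to establish the two directions of the bijection separately. The forward direction is essentially a repackaging of Section~\ref{sub:Rat-vectorFields}: given a regular action $\alpha:\mathbb{G}_a\times X\to X$, that subsection already produces a regular velocity field $\tilde{\partial}:\mathcal{O}_X\to\mathcal{O}_X$, which by Theorem~\ref{prop:RatIntegrable} is rationally integrable. To see that $\partial_0:=\Gamma(X,\tilde{\partial})$ is locally nilpotent, I would note that $\alpha$ induces on global sections a co-action $\alpha^*:A\to A[t]$, where $A=\Gamma(X,\mathcal{O}_X)$, hence a \emph{regular} action $\alpha_0$ on the affine variety $X_0=\mathrm{Spec}(A)$ whose associated derivation is precisely $\partial_0$; local nilpotency then follows from the classical affine theory. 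Since in characteristic zero a regular action is recovered from its velocity field by formal exponentiation, the assignment $\alpha\mapsto\tilde{\partial}$ is injective, so the whole content lies in surjectivity.

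For the converse I would start from a rationally integrable $\tilde{\partial}:\mathcal{O}_X\to\mathcal{O}_X$ with $\partial_0=\Gamma(X,\tilde{\partial})$ locally nilpotent. By Theorem~\ref{prop:RatIntegrable} the induced derivation of $\mathcal{K}_X$ corresponds to a rational action $\alpha:\mathbb{G}_a\times X\dashrightarrow X$, while local nilpotency of $\partial_0$ yields a genuinely regular action $\alpha_0:\mathbb{G}_a\times X_0\to X_0$ through $\exp(t\partial_0):A\to A[t]$. Comparing the two co-actions on $A\subseteq K_X$ shows that the structure morphism $p:X\to X_0$ is equivariant, that is, $p\circ\alpha=\alpha_0\circ(\mathrm{id}\times p)$ as rational maps. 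Moreover, because $\tilde{\partial}$ takes values in $\mathcal{O}_X$, its local flow $\exp(t\tilde{\partial})$ is regular along $\{t=0\}$, so that $\{0\}\times X\subseteq\mathrm{dom}(\alpha)$; in particular $\mathrm{dom}(\alpha)$ is an open neighbourhood of $(0,y)$ for every $y\in X$. Everything then reduces to proving $\mathrm{dom}(\alpha)=\mathbb{G}_a\times X$.

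The first substantial step uses semi-affineness to prevent orbits from escaping. For fixed $x\in X$ the orbit map $s\mapsto s\cdot x$ is a morphism on the open set $D_x=\{s:(s,x)\in\mathrm{dom}(\alpha)\}\subseteq\mathbb{G}_a$, which is cofinite since it contains $0$, and by equivariance its composite with $p$ is the everywhere-defined morphism $s\mapsto\alpha_0(s,p(x))$. As $\mathbb{G}_a$ is a smooth curve and $p$ is proper, the valuative criterion of properness applied at each missing point extends the orbit map to a morphism $\bar{\phi}_x:\mathbb{G}_a\to X$: concretely, orbits cannot run off to infinity because they are confined to the proper fibres of $p$ lying over the orbits of $\alpha_0$ in the affine $X_0$.

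The delicate and genuinely decisive step is to pass from this curve-by-curve extension to regularity of the two-variable map, and this is where I expect the main obstacle. Curve-wise extension alone is never sufficient, since a rational map to a proper target always extends along every individual curve (valuative criterion) yet may fail to be a morphism along a codimension-two locus, as $\mathbb{A}^2\dashrightarrow\mathbb{P}^1$, $(x,y)\mapsto[x:y]$ already shows. The remedy is the group law. Suppose $(s_0,x_0)\notin\mathrm{dom}(\alpha)$ and put $y_0=\bar{\phi}_{x_0}(s_0)$. The morphism $g:\mathbb{G}_a\to\mathbb{G}_a\times X$, $s\mapsto(s_0-s,\bar{\phi}_{x_0}(s))$, sends $s_0$ to $(0,y_0)\in\mathrm{dom}(\alpha)$, so $g^{-1}(\mathrm{dom}(\alpha))$ is a nonempty open subset of $\mathbb{G}_a$; choosing $s_1$ in its intersection with the dense open $D_{x_0}$, both $(s_1,x_0)$ and $(s_0-s_1,\,s_1\cdot x_0)$ lie in $\mathrm{dom}(\alpha)$, whence associativity forces $(s_0,x_0)=\bigl((s_0-s_1)+s_1,x_0\bigr)\in\mathrm{dom}(\alpha)$, a contradiction. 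Thus $\mathrm{dom}(\alpha)=\mathbb{G}_a\times X$ and $\alpha$ is regular. The crux is therefore the interplay between properness, which confines orbits to $X$ and is exactly where semi-affineness is indispensable, and the group law, which then propagates definedness outward from $\{0\}\times X$; when $X$ is not semi-affine this confinement fails and one must instead enlarge $X$ to Zaitsev's partial completion \cite{Za}.
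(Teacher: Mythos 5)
Your proposal is correct, and its core is the same as the paper's: both directions pivot on the induced regular action $\alpha_{0}$ on $X_{0}=\mathrm{Spec}(A)$, $A=\Gamma(X,\mathcal{O}_{X})$, the $\mathbb{G}_{a}$-equivariance of the proper morphism $p:X\rightarrow X_{0}$, and the valuative criterion applied along orbit maps. Two differences are worth recording. For the forward direction the paper invokes Rosenlicht's theorem to produce a $\mathbb{G}_{a}$-invariant affine open $U$ and then uses that $A$ is a $\partial$-stable subring of $\Gamma(U,\mathcal{O}_{X})$; your route via the global co-action $\alpha^{*}:A\rightarrow\Gamma(\mathbb{G}_{a}\times X,\mathcal{O})=A[t]$ is equally valid and avoids Rosenlicht altogether. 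More substantially, in the converse the paper passes directly from the curve-by-curve extension of each orbit map $\xi:\mathbb{G}_{a}\dashrightarrow X$ to the conclusion $\mathrm{dom}(\alpha)=\mathbb{G}_{a}\times X$, with no argument for why the two-variable rational map is defined at the extended points; as your $\mathbb{A}^{2}\dashrightarrow\mathbb{P}^{1}$ example shows, such a jump needs justification. Your group-law argument (picking $s_{1}\in g^{-1}(\mathrm{dom}(\alpha))\cap D_{x_{0}}$ and invoking the propagation property ``if $(g,x)$ and $(g',g\cdot x)$ lie in $\mathrm{dom}(\alpha)$ then $(g'g,x)$ does'' stated after the definition of rational actions) supplies exactly the missing step, and is a genuine improvement in rigor over the published proof.

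The one point where your argument is no more complete than the paper's is the inclusion $\{0\}\times X\subseteq\mathrm{dom}(\alpha)$, equivalently that the orbit map through \emph{every} (not just a general) point is defined somewhere, so that the valuative criterion has something to extend. This is precisely where the hypothesis that $\tilde{\partial}$ takes values in $\mathcal{O}_{X}$ must do its work: the derivation $x^{-1}\partial/\partial y$ on $\mathbb{P}^{2}$ is rationally integrable and trivially locally nilpotent on $\Gamma(\mathbb{P}^{2},\mathcal{O})=k$, yet the associated action is not regular, so the step cannot be free. Your justification, ``the local flow $\exp(t\tilde{\partial})$ is regular along $\{t=0\}$,'' restates the claim rather than proving it: what one knows a priori is only that $\alpha^{*}f=\exp(t\partial)f$ lies in $K_{X}(t)\cap\mathcal{O}_{X,x}[[t]]$ for $f\in\mathcal{O}_{X,x}$, and one must show that such an element lies in $\mathcal{O}_{X\times\mathbb{A}^{1},(x,0)}$. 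This is a weak form of Fatou's lemma and holds because $R=\mathcal{O}_{X,x}$ is Noetherian: the coefficients of a rational power series with coefficients in $R$ span, via their shifted coefficient vectors, a finitely generated $R$-submodule of some $R^{m}$, whence they satisfy a linear recurrence over $R$; this yields $G\in R[t]$ with $G(0)=1$ and $G\cdot\alpha^{*}f\in R[t]$, so $\alpha^{*}f$ is regular at $(x,0)$. Some such lemma is needed to make either your proof or the paper's complete; since the paper elides it entirely, I count this as a shared omission rather than a defect of your approach, but you should be aware that your one-line justification conceals a real (if standard) argument.
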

\begin{proof}
By Rosenlicht theorem \cite{Ro56}, for any regular $\mathbb{G}_{a}$-action
on $X$ there exists of a nonempty $\mathbb{G}_{a}$-invariant affine
open subset $U$. Hence, $\Gamma(U,\tilde{\partial})$ is locally
nilpotent and since $\Gamma(X,\mathcal{O}_{X})\subset\Gamma(U,\mathcal{O}_{X})$
it follows that $\Gamma(X,\tilde{\partial})$ is a locally nilpotent
derivation of $\Gamma(X,\mathcal{O}_{X})$. Conversely, let $\tilde{\partial}:\mathcal{O}_{X}\rightarrow\mathcal{O}_{X}$
be a derivation such that $\partial_{0}=\Gamma(X,\tilde{\partial}):\Gamma(X,\mathcal{O}_{X})\rightarrow\Gamma(X,\mathcal{O}_{X})$
is locally nilpotent. Then $\partial_{0}$ induces a possibly trivial
regular $\mathbb{G}_{a}$-action $\alpha_{0}:\mathbb{G}_{a}\times X_{0}\rightarrow X_{0}$
on $X_{0}=\mathrm{Spec}(\Gamma(X,\mathcal{O}_{X}))$ for which the
canonical morphism $p:X\rightarrow X_{0}$ is $\mathbb{G}_{a}$-equivariant.
In particular, for every point $x\in X$, letting $\xi=\alpha\mid_{\mathbb{G}_{a}\times\left\{ x\right\} }:\mathbb{G}_{a}\dashrightarrow X$,
$t\mapsto\alpha(t,x)$ and $\xi_{0}=\alpha_{0}\mid_{\mathbb{G}_{a}\times p(x)}:\mathbb{G}_{a}\rightarrow X_{0},$$t\mapsto\alpha_{0}(t,p(x))$
, we have a commutative diagram \[\xymatrix {\mathbb{G}_a \ar@{-->}[r]^{\xi} \ar[dr]_{\xi_0} & X \ar[d]^{p} \\ & X_0.}\]
Since $p$ is proper, we deduce from the valuative criterion for properness
applied to the local ring of every closed point $t\in\mathbb{G}_{a}$
that $\alpha$ is defined at every point $\left(x,t\right)\in\mathbb{G}_{a}\times X$
whence is a regular $\mathbb{G}_{a}$-action on $X$. 
\end{proof}
As a consequence of the proof of the above Theorem, we obtain the
following criterion to decide whether a derivation gives rise to a
regular $\mathbb{G}_{a}$-action on a semi-affine variety: 
\begin{cor}
\label{cor:open-LND}Let $X$ be a semi-affine variety and let $\tilde{\partial}:\mathcal{O}_{X}\rightarrow\mathcal{O}_{X}$
be a $k$-derivation. Then $\tilde{\partial}$ defines a regular $\mathbb{G}_{a}$-action
on $X$ if and only if there exists a non empty affine open subset
$U\subset X$ such that $\Gamma(U,\tilde{\partial}):\Gamma(U,\mathcal{O}_{X})\rightarrow\Gamma(U,\mathcal{O}_{X})$
is locally nilpotent.\end{cor}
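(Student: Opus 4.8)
The plan is to deduce both implications directly from Theorem~\ref{thm:Regular-actions} together with the material assembled in its proof, so that the corollary reduces to checking that local nilpotency on a single affine chart descends to the two global hypotheses of that theorem.

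\emph{Necessity.} Suppose first that $\tilde{\partial}$ defines a regular $\mathbb{G}_{a}$-action $\alpha$ on $X$. Exactly as in the proof of Theorem~\ref{thm:Regular-actions}, Rosenlicht's theorem \cite{Ro56} provides a nonempty $\mathbb{G}_{a}$-invariant affine open subset $U\subset X$. The restriction of $\alpha$ to $U$ is then a regular $\mathbb{G}_{a}$-action on the affine variety $U=\mathrm{Spec}(\Gamma(U,\mathcal{O}_{X}))$, and by the classical correspondence recalled in Subsection~\ref{sub:Rat-vectorFields} its velocity derivation $\Gamma(U,\tilde{\partial})$ is locally nilpotent. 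This $U$ is the desired affine open subset.

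\emph{Sufficiency.} Conversely, assume there is a nonempty affine open $U\subset X$ for which $\partial_{U}=\Gamma(U,\tilde{\partial})$ is locally nilpotent. By Theorem~\ref{thm:Regular-actions} it suffices to verify that the derivation induced on $\mathcal{K}_{X}$ is rationally integrable and that $\Gamma(X,\tilde{\partial})$ is locally nilpotent. For the first point I would use that $U$ is dense in the integral variety $X$, so that $K_{X}=K_{U}$ is the field of fractions of $\Gamma(U,\mathcal{O}_{X})$, together with the fact that $\exp(t\partial)$ is a ring homomorphism $K_{X}\rightarrow K_{X}[[t]]$. Writing an arbitrary $f\in K_{X}$ as $a/b$ with $a,b\in\Gamma(U,\mathcal{O}_{X})$, local nilpotency of $\partial_{U}$ gives $\exp(t\partial)(a),\exp(t\partial)(b)\in\Gamma(U,\mathcal{O}_{X})[t]$, whence $\exp(t\partial)(f)=\exp(t\partial)(a)\exp(t\partial)(b)^{-1}$ lies in $K_{X}(t)\cap K_{X}[[t]]$; thus the derivation is rationally integrable. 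For the second point I would exploit once more that $X$ is integral, so the restriction map $\Gamma(X,\mathcal{O}_{X})\hookrightarrow\Gamma(U,\mathcal{O}_{X})$ is injective and identifies $\Gamma(X,\tilde{\partial})$ with the restriction of $\partial_{U}$ to the $\tilde{\partial}$-stable subring $\Gamma(X,\mathcal{O}_{X})$. Since the iterates of $\partial_{U}$ agree with those of $\Gamma(X,\tilde{\partial})$ on this subring, the local nilpotency of $\partial_{U}$ forces $\Gamma(X,\tilde{\partial})$ to be locally nilpotent as well. With both conditions in hand, Theorem~\ref{thm:Regular-actions} yields that $\tilde{\partial}$ defines a regular $\mathbb{G}_{a}$-action on $X$.

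The argument is largely a matter of reassembling pieces already available, and I do not expect a serious obstacle. The one step carrying the genuine content, and therefore the one I would write out with care, is the descent from the chosen affine chart to the two global data of Theorem~\ref{thm:Regular-actions}: namely that local nilpotency on a single, \emph{not necessarily $\mathbb{G}_{a}$-invariant}, affine open $U$ already forces both rational integrability of the induced derivation on $\mathcal{K}_{X}=\mathcal{K}_{U}$ and local nilpotency of the globally defined derivation $\Gamma(X,\tilde{\partial})$, the latter via the inclusion $\Gamma(X,\mathcal{O}_{X})\subset\Gamma(U,\mathcal{O}_{X})$.
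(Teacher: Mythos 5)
Your proof is correct and matches the argument the paper intends: the corollary is stated there as a consequence of the proof of Theorem~\ref{thm:Regular-actions}, with necessity coming from Rosenlicht's invariant affine open subset and sufficiency from verifying the theorem's two hypotheses (rational integrability via $K_{X}=\mathrm{Frac}(\Gamma(U,\mathcal{O}_{X}))$ and local nilpotency of $\Gamma(X,\tilde{\partial})$ via the inclusion $\Gamma(X,\mathcal{O}_{X})\subset\Gamma(U,\mathcal{O}_{X})$). Both directions are carried out exactly as the paper's own reasoning would have them, so there is nothing to add.
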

\begin{example}
The semi-affineness hypothesis cannot be weakened. For instance, letting
$X=\mathbb{A}_{*}^{2}=\mathrm{Spec}(k[x,y])\setminus\left\{ (0,0)\right\} $,
the derivation $\tilde{\partial}=\frac{\partial}{\partial x}:\mathcal{O}_{X}\rightarrow\mathcal{O}_{X}$
only defines a rational $\mathbb{G}_{a}$-action $\alpha:\mathbb{G}_{a}\times X\dashrightarrow X$
since for a point of the form $p=(x_{0},0)\in X$ the orbit map $\xi:\mathbb{G}_{a}\dashrightarrow X$,
$t\mapsto\alpha(t,p)=(x_{0}+t,0)$ is not defined at $t_{0}=-x_{0}$.
On the other hand, the restriction of $\frac{\partial}{\partial x}$
to the principal affine open subset $\left\{ y\neq0\right\} $ of
$X$ is locally nilpotent.
\end{example}
The previous example illustrates the typical situation where a rationally
integrable $k$-derivation $\tilde{\partial}:\mathcal{O}_{X}\rightarrow\mathcal{K}_{X}$
factoring through $\mathcal{O}_{X}$ does not give rise to a regular
$\mathbb{G}_{a}$-action $\alpha:\mathbb{G}_{a}\times X\rightarrow X$.
Namely, even though $\{0\}\times X$ is contained in the domain of
definition $\mathrm{dom}(\alpha)$ of $\alpha$, the $\mathbb{G}_{a}$-orbit
of a point $x$ might not be defined for every time $t\in\mathbb{G}_{a}$.
Nevertheless, in such situations, the following result, which is consequence
of a general construction due to Zaitsev \cite[Theorem 4.12]{Za}
(see also \cite{CD}) shows that it is always possible to find a minimal
equivariant partial completion of $X$, on which the $\mathbb{G}_{a}$-action
extends to a regular one:
\begin{prop}
Let $X$ be an algebraic variety equipped with a rational $\mathbb{G}_{a}$-action
$\alpha:\mathbb{G}_{a}\times X\dashrightarrow X$ associated to a
rationally integrable $k$-derivation $\tilde{\partial}:\mathcal{O}_{X}\rightarrow\mathcal{O}_{X}$.
Then there exists an algebraic variety $\overline{X}$ equipped with
a regular $\mathbb{G}_{a}$-action $\overline{\alpha}:\mathbb{G}_{a}\times\overline{X}\rightarrow\overline{X}$
and a $\mathbb{G}_{a}$-equivariant open immersion $j:X\hookrightarrow\overline{X}$.
Furthermore, such a triple $(\overline{X},\overline{\alpha},j)$ with
the additional property that $\overline{X}\setminus X$ contains no
$\mathbb{G}_{a}$-orbits is unique up to equivalence. 
\end{prop}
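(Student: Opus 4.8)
The plan is to deduce the statement from Zaitsev's regularization theorem \cite[Theorem 4.12]{Za} and then to cut the resulting completion down to its minimal orbit-saturated part, establishing uniqueness by an indeterminacy argument. Since $\tilde{\partial}$ is rationally integrable, Theorem~\ref{prop:RatIntegrable} guarantees that the data amounts to an honest rational $\mathbb{G}_{a}$-action $\alpha:\mathbb{G}_{a}\times X\dashrightarrow X$. Zaitsev's theorem applies to such a rational action of the algebraic group $\mathbb{G}_{a}$ and produces a variety $\overline{X}'$ endowed with a regular action $\overline{\alpha}':\mathbb{G}_{a}\times\overline{X}'\to\overline{X}'$ together with a $\mathbb{G}_{a}$-equivariant open immersion $j':X\hookrightarrow\overline{X}'$ extending $\alpha$; this step is the technical heart and is the one I would simply quote. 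To enforce the minimality property I would then replace $\overline{X}'$ by the union $\overline{X}$ of those $\mathbb{G}_{a}$-orbits that meet $j'(X)$, that is, by the image $\overline{\alpha}'(\mathbb{G}_{a}\times j'(X))$.

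The point to check here is that $\overline{X}$ is open in $\overline{X}'$ and $\mathbb{G}_{a}$-stable. Openness follows because the orbit morphism $\psi=\overline{\alpha}':\mathbb{G}_{a}\times\overline{X}'\to\overline{X}'$ factors as the shear isomorphism $(t,x)\mapsto(t,t\cdot x)$ followed by the second projection $\mathbb{G}_{a}\times\overline{X}'\to\overline{X}'$, which is flat of finite type, hence open; thus $\overline{X}=\psi(\mathbb{G}_{a}\times j'(X))$ is the image of an open set under an open morphism. It is $\mathbb{G}_{a}$-stable and dense, being an open subscheme of the variety $\overline{X}'$ containing $j'(X)$, so it is an open subvariety, and the restriction $\overline{\alpha}$ of $\overline{\alpha}'$ together with the corestriction $j$ of $j'$ give an equivariant partial completion of $X$. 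By construction every orbit of $\overline{X}$ meets $j(X)$, so any $\mathbb{G}_{a}$-orbit contained in $\overline{X}\setminus j(X)$ would be an orbit meeting $j(X)$ and yet disjoint from it, which is impossible; this is exactly the asserted minimality.

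For uniqueness, let $(\overline{X}_{1},\overline{\alpha}_{1},j_{1})$ and $(\overline{X}_{2},\overline{\alpha}_{2},j_{2})$ be two triples with the minimality property. Since each $j_{i}$ identifies $X$ with a dense open subset, the composition $j_{2}\circ j_{1}^{-1}$ extends to a birational map $\phi:\overline{X}_{1}\dashrightarrow\overline{X}_{2}$, and the equivariance of $j_{1}$ and $j_{2}$ together with $\phi\circ j_{1}=j_{2}$ on the dense open $X$ forces $\phi$ to be $\mathbb{G}_{a}$-equivariant as a rational map. I would then show that the domain of definition $U=\mathrm{dom}(\phi)$ is $\mathbb{G}_{a}$-stable: for each point $g$ of $\mathbb{G}_{a}$ the translations $\tau_{g}^{i}$ are automorphisms of $\overline{X}_{i}$, equivariance gives $\phi\circ\tau_{g}^{1}=\tau_{g}^{2}\circ\phi$, and comparing the two intrinsically defined domains of definition yields $(\tau_{g}^{1})^{-1}(U)=U$; passing to $\overline{k}$-points this means $U$ is $\mathbb{G}_{a}$-stable. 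Now $U\supseteq X$, so the closed complement $\overline{X}_{1}\setminus U$ is $\mathbb{G}_{a}$-stable and contained in $\overline{X}_{1}\setminus j_{1}(X)$; were it nonempty it would contain the whole orbit of any of its points, contradicting the minimality of $\overline{X}_{1}$. Hence $U=\overline{X}_{1}$ and $\phi$ is a morphism; by symmetry $\phi^{-1}$ is a morphism as well, so $\phi$ is a $\mathbb{G}_{a}$-equivariant isomorphism with $\phi\circ j_{1}=j_{2}$, which is the desired equivalence.

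The genuinely delicate point in our own contribution is the uniqueness argument, and within it the claim that $\mathrm{dom}(\phi)$ is $\mathbb{G}_{a}$-stable: one must use that the domain of definition is intrinsic to a rational map, so that the equality $\phi\circ\tau_{g}^{1}=\tau_{g}^{2}\circ\phi$ of rational maps really forces equality of domains, and that stability checked on geometric points descends to $k$. Existence is not an obstacle for us, since it is handed to us by \cite{Za}; the only work there is the elementary verification that the orbit-saturation is open, for which the shear-isomorphism factorization of the action morphism is the key.
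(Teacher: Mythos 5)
Your proof is correct, but it is considerably more self-contained than what the paper actually does: the paper offers no argument at all, attributing the entire proposition --- both the existence and the uniqueness of the minimal equivariant partial completion --- to Zaitsev's construction \cite[Theorem 4.12]{Za} (with a pointer to \cite{CD}). You invoke Zaitsev only for the bare existence of some triple $(\overline{X}',\overline{\alpha}',j')$ and then supply the two remaining reductions yourself: minimality, by replacing $\overline{X}'$ with the orbit saturation $\overline{\alpha}'(\mathbb{G}_{a}\times j'(X))$, whose openness you correctly deduce from the factorization of the action morphism through the shear isomorphism followed by the flat (hence open) projection; and uniqueness, by showing that the domain of definition $U$ of the comparison birational map $\phi:\overline{X}_{1}\dashrightarrow\overline{X}_{2}$ is $\mathbb{G}_{a}$-stable, so that $\overline{X}_{1}\setminus U$ is a closed invariant subset of $\overline{X}_{1}\setminus j_{1}(X)$ and therefore empty by minimality. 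Both arguments are sound; in particular the bookkeeping $\mathrm{dom}(\phi\circ\tau_{g}^{1})=(\tau_{g}^{1})^{-1}(\mathrm{dom}(\phi))$ and $\mathrm{dom}(\tau_{g}^{2}\circ\phi)=\mathrm{dom}(\phi)$ is exactly right because the translations $\tau_{g}^{i}$ are automorphisms. The one point that deserves more emphasis than your phrase ``such a rational action'' gives it is that the existence step quoted from \cite{Za} genuinely requires the hypothesis that $\tilde{\partial}$ maps $\mathcal{O}_{X}$ into $\mathcal{O}_{X}$: for a vector field with poles, such as $x^{-1}\frac{\partial}{\partial y}$ on $\mathbb{A}^{2}$, no equivariant open completion can exist, since the velocity field of a regular action on $\overline{X}$ restricts to a regular vector field on the open subset $X$. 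Weil-type regularization of an arbitrary rational action yields only an equivariant birational model, not an open immersion of $X$ itself, so this regularity hypothesis is precisely what makes the citation legitimate.
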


\section{Applications \label{sec:3}}

\subsection{The Rational Makar-Limanov invariant}

By analogy with the usual Makar-Limanov invariant \cite{MLNotes}
of an affine $k$-variety $X=\mathrm{Spec}(A)$, which is defined
as the sub-algebra $\mathrm{ML}(A)$ of $A$ consisting of regular
functions on $X$ which are invariant under all regular $\mathbb{G}_{a}$-action
on $X$, it is natural to define the \emph{Rational Makar-Limanov
invariant} of a $k$-variety $X$ as the sub-field $\mathrm{RML}(X)$
of $K_{X}$ consisting of rational functions on $X$ which are invariant
under all rational $\mathbb{G}_{a}$-actions on $X$. Equivalently,
$\mathrm{RML}(X)$ is equal to the intersection in $K_{X}$ of the
kernels of all rationally integrable $k$-derivations of $K_{X}$.
The $\mathrm{RML}$ invariant of a $k$-rational variety is clearly
equal to $k$ while Proposition~\ref{prop:CalculRusse} shows in
particular that $\mathrm{RML}(X)=K_{X}$ if and only if $X$ is not
birationally ruled. The following proposition provides the rational
counter-part of a result due to Makar-Limanov \cite[Lemma 21]{MLNotes}
which asserts that if $A$ is a $k$-algebra such that $\mathrm{ML}(A)=A$
then $\mathrm{ML}(A[x])=A$. 
\begin{prop}
If $X$ is not birationnally ruled then the projection $\mathrm{pr}_{X}:X\times\mathbb{P}^{1}\rightarrow X$
is invariant under all rational $\mathbb{G}_{a}$-actions on $X\times\mathbb{P}^{1}$. \end{prop}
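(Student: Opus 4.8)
We argue by contradiction. By Theorem~\ref{prop:RatIntegrable} a rational $\mathbb{G}_{a}$-action on $X\times\mathbb{P}^{1}$ corresponds to a rationally integrable $k$-derivation $\partial$ of $K_{X\times\mathbb{P}^{1}}=K_{X}(w)$, and saying that $\mathrm{pr}_{X}$ is invariant means exactly that $K_{X}$ is contained in the field of invariants $K_{0}=\mathrm{Ker}\,\partial$. So I would suppose that $\partial$ is a rationally integrable derivation of $K_{X}(w)$ with $K_{X}\not\subseteq K_{0}$ and derive that $X$ is birationally ruled, contradicting the hypothesis through Proposition~\ref{prop:CalculRusse}. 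As a first reduction, note that $K_{X}(w)$ is purely transcendental of degree one over $K_{X}$, so $K_{X}$ is algebraically closed in $K_{X}(w)$ and has transcendence degree $\dim X$; since $\mathrm{trdeg}_{k}K_{0}=\dim X$ as well, the inclusion $K_{X}\subseteq K_{0}$ would in fact force $K_{0}=K_{X}$. Thus what has to be excluded is precisely the existence of an invariant field $K_{0}\neq K_{X}$.

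Using the standard form (\ref{eq:StandardForm}) of the co-action, write $K_{X}(w)=K_{0}(s)$ for a rational slice $s$ (so $\partial=\partial/\partial s$ and $K_{0}$ is algebraically closed in $K_{X}(w)$). Geometrically this exhibits a second ruling of $X\times\mathbb{P}^{1}$, namely the quotient map $\phi\colon X\times\mathbb{P}^{1}\dashrightarrow Q$ with $K_{Q}=K_{0}$, whose generic fibre is the closure of a general $\mathbb{G}_{a}$-orbit, a rational curve isomorphic to $\mathbb{P}^{1}_{K_{0}}$. The restriction of $\mathrm{pr}_{X}$ to this generic orbit is the element of $X(K_{0}(s))$ given by the inclusion $K_{X}\hookrightarrow K_{0}(s)$, and it is constant exactly when $K_{X}\subseteq K_{0}$. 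Hence the assumption $K_{X}\not\subseteq K_{0}$ means that the general orbit is \emph{not} contracted by $\mathrm{pr}_{X}$, i.e.\ it maps onto a genuine rational curve in $X_{K_{0}}=X\times_{k}\mathrm{Spec}(K_{0})$.

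When $\dim X=1$ this is already a contradiction, and this is the clean prototype: a curve that is not birationally ruled has positive genus, and geometric genus is preserved under the extension $k\subseteq K_{0}$, so $X_{K_{0}}$ admits no non-constant $K_{0}$-morphism from $\mathbb{P}^{1}_{K_{0}}$; the orbit must therefore be contracted and $K_{X}\subseteq K_{0}$. In general, letting the orbit vary over $Q$ produces, after composing with Rosenlicht's equivariant trivialisation $X\times\mathbb{P}^{1}\sim Q\times\mathbb{A}^{1}$ of \cite{Ro56}, a dominant rational map $Q\times\mathbb{P}^{1}\dashrightarrow X$ with one-dimensional general fibres; equivalently, $X\times\mathbb{P}^{1}$ carries the two distinct rulings $\mathrm{pr}_{X}$ and $\phi$. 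The plan is then to upgrade this to a nontrivial rational $\mathbb{G}_{a}$-action on $X$ itself, i.e.\ a nonzero rationally integrable derivation of $K_{X}$: concretely, to show that $K_{X}$ is purely transcendental over $F=K_{X}\cap K_{0}$, so that translation along a transcendence generator is such an action and Proposition~\ref{prop:CalculRusse} delivers the desired contradiction.

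The hard part will be exactly this last step. Transcendence-degree bookkeeping is not enough: by L\"{u}roth the compositum $K_{X}K_{0}$ is purely transcendental of degree one over each of $K_{X}$ and $K_{0}$, yet this does not by itself force $K_{X}$ to be purely transcendental over $F$, and $\partial$ need not preserve $K_{X}K_{0}$ (already for a generator of the shape $s^{2}$), so one cannot simply restrict the derivation. What must be exploited is that the two rulings arise from a single $\mathbb{G}_{a}$-action. I expect the cleanest route is to bring in the order of vanishing along $s=\infty$, the unique divisorial valuation of $K_{0}(s)/K_{0}$ invariant under the translation flow, whose residue field is $K_{0}$: passing to leading coefficients along the flow should intertwine the inclusion $K_{X}\hookrightarrow K_{0}(s)$ with an honest rationally integrable derivation on $K_{X}$, thereby forbidding $K_{0}\neq K_{X}$ and closing the contradiction.
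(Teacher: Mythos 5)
Your setup and first reduction are correct and coincide with the paper's: by Proposition~\ref{prop:CalculRusse} and the standard form (\ref{eq:StandardForm}), one writes $K_{X\times\mathbb{P}^{1}}=K_{X}(u)=K_{0}(s)$ and must show that a nontrivial action forces $K_{0}=K_{X}$. Your one-dimensional case is also fine, but it is the easy part. The problem is that your argument stops exactly at the step you yourself flag as ``the hard part'': in dimension $\geq2$ you offer only a plan (show $K_{X}$ is purely transcendental over $F=K_{X}\cap K_{0}$) together with an unsubstantiated hope that ``passing to leading coefficients along the flow should intertwine'' the inclusion $K_{X}\hookrightarrow K_{0}(s)$ with a rationally integrable derivation on $K_{X}$. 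This is a genuine gap, and the intermediate target is not even clearly the right one: nothing in your argument forces $K_{X}$ to be ruled over that particular subfield $K_{X}\cap K_{0}$. Worse, producing a nontrivial rational $\mathbb{G}_{a}$-action on $X$ (your stated goal) is, by Proposition~\ref{prop:CalculRusse}, \emph{equivalent} to exhibiting $X$ as birationally ruled, so your reduction does not advance the problem; it restates it.

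The missing ingredient is Ohm's Ruled Residue Theorem \cite{Oh83}, and it must be applied with the valuation oriented the opposite way from your sketch. The paper takes the $u^{-1}$-adic valuation $\nu$ on $K_{X}(u)$, whose residue field is $K_{X}$, regards $K_{X}(u)=K_{0}(s)$ as a simple transcendental extension of $K_{0}$, and restricts $\nu$ to a valuation $\nu_{0}$ on $K_{0}$. If $\nu_{0}$ were nontrivial, its residue field $\kappa_{0}$ would have transcendence degree over $k$ strictly smaller than $\mathrm{trdeg}_{k}K_{0}=\mathrm{trdeg}_{k}K_{X}$; since $K_{X}$ is the residue field of $\nu$, it is then not algebraic over $\kappa_{0}$, and the Ruled Residue Theorem forces $K_{X}$ to be a simple transcendental extension of the algebraic closure of $\kappa_{0}$ in $K_{X}$, contradicting the hypothesis that $X$ is not birationally ruled. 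Hence $\nu_{0}$ is trivial, which places $K_{0}$ inside the residue field $K_{X}$, and equality follows since both fields have the same transcendence degree and are algebraically closed in $K_{X\times\mathbb{P}^{1}}$. Your proposed valuation at $s=\infty$ points the wrong way: its residue field is $K_{0}$, so the Ruled Residue Theorem applied to it (restricted to $K_{X}$) would only say that $K_{0}$ is ruled over something, and there is no non-ruledness hypothesis on $K_{0}$ to contradict. To make the theorem bite, $K_{X}$ must appear as the residue field of the big valuation, which is precisely the paper's choice.
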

\begin{proof}
Let $K_{X\times\mathbb{P}^{1}}=K_{X}(u)$ where $u$ is transcendental
over $K_{X}$. By virtue of Proposition \ref{prop:CalculRusse} above,
a rational $\mathbb{G}_{a}$-action $\alpha:\mathbb{G}_{a}\times(X\times\mathbb{P}^{1})\dashrightarrow X\times\mathbb{P}^{1}$
on $X\times\mathbb{P}^{1}$ gives rise to a decomposition $K_{X\times\mathbb{P}^{1}}=K_{X\times\mathbb{P}^{1}}^{\mathbb{G}_{a}}(s)$
for a suitable rational slice $s$. Letting $\nu_{0}$ be the restriction
of the $u^{-1}$-adic valuation on $K_{X\times\mathbb{P}^{1}}$ to
the sub-field $K_{X\times\mathbb{P}^{1}}^{\mathbb{G}_{a}}$, it is
enough to show that $\nu_{0}(x)=0$ for every $x\in K_{X\times\mathbb{P}^{1}}^{\mathbb{G}_{a}}$.
Indeed, noting that the residue field of the $u^{-1}$-adic valuation
on $K_{X\times\mathbb{P}^{1}}$ is equal to $K_{X}$, this will imply
that $K_{X\times\mathbb{P}^{1}}^{\mathbb{G}_{a}}$ is contained in
$K_{X}$ whence is equal to it since these two fields have the same
transcendence degree over $k$ and are both algebraically closed in
$K_{X\times\mathbb{P}^{1}}$. So suppose on the contrary that there
exists $x\in K_{X\times\mathbb{P}^{1}}^{\mathbb{G}_{a}}$ transcendental
over $k$ with $\nu_{0}(x)\neq0$. Up to changing $x$ for its inverse
we may assume that $\nu_{0}(x)<0$. It follows that the transcendence
degree of the residue field $\kappa_{0}$ of $\nu_{0}$ over $k$
is strictly smaller than that of $K_{X\times\mathbb{P}^{1}}^{\mathbb{G}_{a}}$.
The Ruled Residue Theorem \cite{Oh83} then implies that $K_{X}$
is a simple transcendental extension of the algebraic closure of $\kappa_{0}$
in $K_{X}$, in contradiction with the hypothesis that $X$ is not
birationally ruled. \end{proof}
\begin{cor}
A $k$-variety admits two rational $\mathbb{G}_{a}$-actions $\alpha_{i}:\mathbb{G}_{a}\times X\dashrightarrow X$,
$i=1,2$, such that for a general $k$-rational point $x\in X$ the
rational orbits maps $\alpha_{i}\mid_{\mathbb{G}_{a}\times\left\{ x\right\} }:\mathbb{G}_{a}\dashrightarrow X$,
$t\mapsto\alpha_{i}(t,x)$ do not coincide if and only if it is birationally
isomorphic over $k$ to $Y\times\mathbb{P}^{2}$ for some $k$-variety
$Y$. 
\end{cor}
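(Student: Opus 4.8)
The first thing I would do is translate the geometric hypothesis into field theory. Writing $\partial_1,\partial_2 : K_X \to K_X$ for the rationally integrable velocity derivations attached to $\alpha_1,\alpha_2$ by Theorem \ref{prop:RatIntegrable}, and $K_i = \mathrm{Ker}\,\partial_i$ for their invariant fields, I would argue that the general orbit maps of $\alpha_1$ and $\alpha_2$ coincide exactly when $\partial_1$ and $\partial_2$ are proportional over $K_X$, which in turn happens if and only if $K_1 = K_2$. One implication is clear; for the other, if $K_1 = K_2 =: F$, then choosing a slice with $\partial_1 s_1 = 1$ gives $K_X = F(s_1)$ and $\partial_1 = \partial/\partial s_1$, and since $\partial_2$ kills $F$ it must be $\partial_2 = (\partial_2 s_1)\,\partial_1$. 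So the whole corollary reduces to proving that $X$ carries two rationally integrable derivations with $K_1 \neq K_2$ if and only if $X \sim Y \times \mathbb{P}^2$.

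For the backward direction I would simply exhibit the pair. If $K_X \cong K_Y(x,y)$ with $x,y$ algebraically independent over $K_Y$, the two coordinate translations correspond to $\partial/\partial x$ and $\partial/\partial y$, which are rationally integrable with distinct invariant fields $K_Y(y) \neq K_Y(x)$; this is immediate from Theorem \ref{prop:RatIntegrable} and the previous paragraph.

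The forward direction is where the real work lies, and the plan is to use the first action to slice off one ruling and the preceding proposition to produce a second. By Proposition \ref{prop:CalculRusse} applied to $\alpha_1$ I can write $K_X = K_1(s_1)$ with $s_1$ transcendental over $K_1$, so that $X$ is birational to $X_1 \times \mathbb{P}^1$ for a model $X_1$ with $K_{X_1} = K_1$. I would then show $X_1$ is itself birationally ruled by contradiction: if it were not, then $\alpha_2$, viewed as a rational $\mathbb{G}_a$-action on $X_1 \times \mathbb{P}^1$, would fall under the preceding proposition (whose hypothesis is exactly that the base $X_1$ be non-ruled), forcing its field of invariants $K_2 = K_X^{\mathbb{G}_a}$ to equal $K_{X_1} = K_1$, contradicting $K_1 \neq K_2$. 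Once $X_1$ is known to be ruled, Proposition \ref{prop:CalculRusse} gives $K_1 = F(s_2)$ with $s_2$ transcendental over a subfield $F$, whence $K_X = F(s_1,s_2)$ is purely transcendental of degree two over $F$ and $X \sim Y \times \mathbb{A}^2 \sim Y \times \mathbb{P}^2$ for any $Y$ with $K_Y = F$.

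I expect the main obstacle to be this last reduction: seeing that $\partial_1$ exhibits $X$ birationally as $X_1 \times \mathbb{P}^1$ and that the preceding proposition is precisely the device forbidding the base $X_1$ from being non-ruled in the presence of the second, inequivalent action $\alpha_2$. Everything else — the two translation actions, the identification $\mathbb{A}^2 \sim \mathbb{P}^2$, and the bookkeeping of transcendence degrees — is routine.
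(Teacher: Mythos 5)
Your argument is correct and is essentially the paper's own: the corollary is stated there without proof, as an immediate consequence of the preceding proposition, and your route --- reducing non-coincidence of general orbits to inequality of the invariant subfields $K_{1}\neq K_{2}$, splitting off one ruling $K_{X}=K_{1}(s_{1})$ via Proposition \ref{prop:CalculRusse}, invoking the preceding proposition to force the base $X_{1}$ to be ruled as well, and exhibiting the two coordinate translations $\partial/\partial x$, $\partial/\partial y$ for the converse --- is precisely that intended derivation. The only point worth making explicit is that, exactly like the statement itself, your reduction tacitly assumes both actions are nontrivial (if $\alpha_{2}$ were trivial then $K_{2}=K_{X}$ and the inclusion $K_{1}\subseteq K_{2}$ given by the proposition yields no contradiction), and that the forced equality $K_{1}=K_{2}$ uses, as in the paper's proof of the proposition, that both fields have transcendence degree $\dim X-1$ and are algebraically closed in $K_{X}$.
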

One could have expected more generally that if $X$ is not birationally
ruled then for every $n\geq1$ the projection $\mathrm{pr}_{X}:X\times\mathbb{P}^{n}\rightarrow X$
is invariant under all rational $\mathbb{G}_{a}$-actions on $X\times\mathbb{P}^{n}$.
But this is wrong, as shown by the following example derived from
a famous counter-example to the birational version of the Zariski
Cancellation Problem \cite{BC85}.
\begin{example}
The affine threefold $X\subset\mathbb{A}_{\mathbb{C}}^{4}=\mathrm{Spec}(\mathbb{C}[x,y,z,t])$
defined by the equation 
\[
y^{2}+(t^{4}+1)(t^{6}+t^{4}+1)z^{2}=2x^{3}+3t^{2}x^{2}++t^{4}+1
\]
has no nontrivial rational $\mathbb{G}_{a}$-actions but $\mathrm{RML}(X\times\mathbb{A}^{3})=\mathbb{C}$. \end{example}
\begin{proof}
By virtue of Exemple 2.9 in \cite{MB84}, $X$ is a unirational, non-rational
affine variety with the property that $X\times\mathbb{A}^{3}$ is
rational. So $\mathrm{RML}(X\times\mathbb{A}^{3})=\mathbb{C}$ and
it remains to check that $\mathrm{RML}(X)=K_{X}$. By virtue of Proposition~\ref{prop:CalculRusse},
the existence of a nontrivial rational $\mathbb{G}_{a}$-action on
$X$ would imply that $X$ is birationnally isomorphic to $S\times\mathbb{A}^{1}$
for a smooth affine surface $S$. But since $X$ is unirational, $S$
would be unirational whence rational and so would be $X$, a contradiction. \end{proof}
\begin{rem}
In the regular case, an example of a smooth rational affine surface
$S=\mathrm{Spec}(A)$ such that $\mathrm{ML}(S)=A$ but $\mathrm{ML}(S\times\mathbb{A}^{2})=\mathbb{C}$
was given in \cite{dub13}. 
\end{rem}

\subsection{Homogeneous rational $\mathbb{G}_{a}$-actions on toric varieties}

Recall that a \emph{toric variety} $X$ is a normal $k$-variety equipped
with an effective regular action $\mu:\mathbb{T}\times X\rightarrow X$
of a split torus $\mathbb{T}=\mathbb{G}_{m,k}^{n}$ having an open
orbit. A rational $\mathbb{G}_{a}$-action $\alpha:\mathbb{G}_{a}\times X\dashrightarrow X$
on $X$ is said to be $\mathbb{T}$-homogeneous if it semi-commutes
with the action of $\mathbb{T}$. This means equivalently that the
sub-group of $\mathrm{Bir}_{k}(X)$ generated by these actions is
isomorphic to an algebraic group of the form $\mathbb{T}\ltimes\mathbb{G}_{a}$.
In this subsection, we give a combinatorial characterization of homogeneous
rational $\mathbb{G}_{a}$-actions on a toric variety $X$ in terms
of their corresponding rationally integrable derivations.

Let us briefly recall from \cite{Fu} some basic facts about the combinatorial
description of toric varieties. Let $M=\mathrm{Hom}(\mathbb{T},\mathbb{G}_{m,k})$
be the character lattice and let $N=\mathrm{Hom}(\mathbb{G}_{m,k},\mathbb{T})$
be the 1-parameter subgroup lattice of $\mathbb{T}$. Following the
usual convention, we consider $M$ and $N$ as additive lattices and
we let $M_{\mathbb{Q}}=M\otimes_{\mathbb{Z}}\mathbb{Q}$ and $N_{\mathbb{Q}}=N\otimes_{\mathbb{Z}}\mathbb{Q}$.
A fan $\Sigma\in N_{\mathbb{Q}}$ is a finite collection of strongly
convex polyhedral cones such that every face of $\sigma\in\Sigma$
is contained in $\Sigma$ and for all $\sigma,\sigma'\in\Sigma$ the
intersection $\sigma\cap\sigma'$ is a face in both cones $\sigma$
and $\sigma'$. A toric variety $X_{\Sigma}$ is built from $\Sigma$
in the following way. For every $\sigma\in\Sigma$, we define an affine
toric variety $X_{\sigma}=\mathrm{Spec}(k[\sigma^{\vee}\cap M])$,
where $\sigma^{\vee}\subseteq M_{\mathbb{Q}}$ is the dual cone of
$\sigma$ and $k[\sigma^{\vee}\cap M]$ is the semigroup algebra of
$\sigma^{\vee}\cap M$, i.e., 
\[
k[\sigma^{\vee}\cap M]=\bigoplus_{m\in\sigma^{\vee}\cap M}k\cdot\chi^{m},\quad\mbox{with}\quad\chi^{0}=1,\mbox{ and }\chi^{m}\cdot\chi^{m'}=\chi^{m+m'},\ \forall m,m'\in\sigma^{\vee}\cap M\,.
\]
Furthermore, if $\tau\subseteq\sigma$ is a face of $\sigma$, then
the inclusion of algebras $k[\sigma^{\vee}\cap M]\hookrightarrow k[\tau^{\vee}\cap M]$
induces a $\mathbb{T}$-equivariant open embedding $X_{\tau}\hookrightarrow X_{\sigma}$.
The toric variety $X_{\Sigma}$ associated to the fan $\Sigma$ is
then defined as the variety obtained by gluing the family $\{X_{\sigma}\mid\sigma\in\Sigma\}$
along the open embeddings $X_{\sigma}\hookleftarrow X_{\sigma\cap\sigma'}\hookrightarrow X_{\sigma'}$
for all $\sigma,\sigma'\in\Sigma$.

Let $X_{\Sigma}$ be a toric variety. Since the torus $\mathbb{T}$
acts on $X_{\Sigma}$ with an open orbit, the field of fractions $K_{X}$
of $X$ is equal to $K_{\mathbb{T}}=\mathrm{Frac}(k[M])$ which is
a purely transcendental extension of $k$ of degree $n=\dim\mathbb{T}$.
Let $\alpha:\mathbb{G}_{a}\times X\dashrightarrow X$ be a rational
$\mathbb{T}$-homogeneous $\mathbb{G}_{a}$-action on $X$, let $\tilde{\partial}:\mathcal{K}_{\mathbb{T}}\rightarrow\mathcal{K}_{\mathbb{T}}$
be the corresponding rational $k$-derivation and let $\partial=\Gamma(\mathbb{T},\tilde{\partial}):K_{\mathbb{T}}\rightarrow K_{\mathbb{T}}$
be the induced $k$-derivation of $K_{\mathbb{T}}$. In the case where
$\alpha$ is regular, it is well known that $\alpha$ is $\mathbb{T}$-homogeneous
if and only if $\partial$ is homogeneous, i.e., homogeneous as a
linear map with respect to the $M$-grading on $k[M]$. In the rational
case, the field $K_{\mathbb{T}}$ is not graded but it is the fraction
field of the $M$-graded ring $k[M]$, so we say that $f\in K_{\mathbb{T}}$
is homogeneous if $f$ is a quotient of homogeneous elements. We say
that a derivation $\partial:K_{\mathbb{T}}\rightarrow K_{\mathbb{T}}$
is homogeneous if it sends homogeneous elements to homogeneous elements. 
\begin{lem}
A rational $\mathbb{G}_{a}$-action $\alpha:\mathbb{G}_{a}\times\mathbb{T}\dashrightarrow\mathbb{T}$
is $\mathbb{T}$-homogeneous if and only if the corresponding $k$-derivation
$\partial:k[M]\rightarrow K_{\mathbb{T}}$ is homogeneous. Furthermore,
every homogeneous rational $k$-derivation $\partial:k[M]\rightarrow K_{\mathbb{T}}$
is regular, i.e. factors through $k[M]$.\end{lem}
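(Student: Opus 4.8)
The plan is to translate the geometric notion of $\mathbb{T}$-homogeneity into an infinitesimal weight condition on $\partial$, and then to read off both assertions from the explicit shape that homogeneous derivations of $k[M]$ are forced to take.

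First I would fix the $M$-grading coming from the translation action of $\mathbb{T}$ on itself, whose co-action sends $\chi^m$ to $\chi^m\otimes\chi^m$; thus each point $t$ induces a $k$-algebra automorphism $t^*$ of $K_{\mathbb{T}}$ with $t^*(\chi^m)=\chi^m(t)\chi^m$. The key preliminary observation is that the nonzero homogeneous elements of $K_{\mathbb{T}}$ are exactly the $c\chi^p$ with $c\in k$ and $p\in M$: every nonzero homogeneous element of $k[M]$ is a scalar multiple of some invertible $\chi^p$, so a homogeneous element of the fraction field of degree $p$ equals $\chi^p$ times a homogeneous element of degree $0$, and the degree-$0$ homogeneous elements are just the scalars. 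In particular the degree of any nonzero homogeneous element of $K_{\mathbb{T}}$ lies in $M$.

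Next I would record the infinitesimal form of semi-commutation. Conjugation makes the automorphisms $t^*$ act on $k$-derivations by ${}^t\partial=t^*\circ\partial\circ(t^*)^{-1}$, and since we work in characteristic zero the action $\alpha$ is recovered from $\partial$ via the exponential map of Theorem~\ref{prop:RatIntegrable}. Hence the subgroup of $\mathrm{Bir}_k(\mathbb{T})$ generated by $\alpha$ and $\mathbb{T}$ is of the form $\mathbb{T}\ltimes\mathbb{G}_a$, i.e. $\alpha$ is $\mathbb{T}$-homogeneous, precisely when $\mathbb{T}$ normalizes $\partial$ while rescaling the parameter of $\mathbb{G}_a$ through a character, that is, when there exists $e\in M$ with ${}^t\partial=\chi^e(t)\,\partial$ for all $t$. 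This is exactly the condition that $\partial$ be a weight vector of weight $e$, equivalently homogeneous of degree $e$ for the $M$-grading. To make the dictionary explicit on generators, write $\partial(\chi^m)=\sum_p a_{m,p}\chi^p$ in homogeneous components; then ${}^t\partial(\chi^m)=\sum_p a_{m,p}\chi^{p-m}(t)\chi^p$ while $\chi^e(t)\partial(\chi^m)=\sum_p a_{m,p}\chi^e(t)\chi^p$. Comparing these for all $t\in\mathbb{T}$ forces $p=m+e$ whenever $a_{m,p}\neq0$, so ${}^t\partial=\chi^e(t)\partial$ holds for all $t$ if and only if each $\partial(\chi^m)$ is homogeneous of degree $m+e$, i.e. if and only if $\partial$ is homogeneous. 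This settles the equivalence and identifies the weight $e$ with the degree of $\partial$.

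It then remains to extract the explicit form of a homogeneous derivation in order to obtain the regularity assertion. If $\partial$ is homogeneous of degree $e$, the above gives $\partial(\chi^m)=c_m\chi^{m+e}$ for scalars $c_m$, and feeding $\chi^m\chi^{m'}=\chi^{m+m'}$ into the Leibniz rule yields $c_{m+m'}=c_m+c_{m'}$; thus $m\mapsto c_m$ is a linear form, i.e. $c_m=\delta(m)$ for a unique $\delta\in N_k=\mathrm{Hom}(M,k)$, and $\partial=\chi^e\partial_\delta$ where $\partial_\delta(\chi^m)=\delta(m)\chi^m$ is the invariant derivation attached to $\delta$. Since $e\in M$ we have $\partial(\chi^m)=\delta(m)\chi^{m+e}\in k[M]$, so by $k$-linearity $\partial$ maps $k[M]$ into $k[M]$ and is regular. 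The part I expect to require the most care is not computational but conceptual, namely justifying rigorously the passage from the group-theoretic semi-commutation condition to the infinitesimal eigenvector identity ${}^t\partial=\chi^e(t)\partial$, while keeping track of the facts that $\alpha$ is only a rational action and that the relevant $\mathbb{T}$-action on derivations is understood at the generic point of $\mathbb{T}$; once this dictionary is secured, the remaining steps are the short weight computation and the Leibniz identity above.
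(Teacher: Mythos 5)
Your treatment of the second assertion is essentially the paper's own argument and is fine: every graded piece of $k[M]$ is spanned by a unit $\chi^{m}$, so the homogeneous elements of $K_{\mathbb{T}}$ are exactly the scalar multiples $c\chi^{p}$, and a homogeneous derivation therefore sends each $\chi^{m}$ into $k[M]$. The gap is in your proof of the equivalence, at the step ``write $\partial(\chi^{m})=\sum_{p}a_{m,p}\chi^{p}$ in homogeneous components.'' In the direction where $\partial$ is assumed homogeneous this is harmless (the sum has a single term), but in the direction you actually need it for -- semi-commutation implies homogeneity of $\partial$ -- the element $\partial(\chi^{m})$ is a priori an arbitrary element of $K_{\mathbb{T}}$, and elements of $K_{\mathbb{T}}$ do \emph{not} admit decompositions into homogeneous components: the set of finite sums of homogeneous elements is exactly $k[M]$ (for instance $1/(1+\chi^{m})$ is not such a sum). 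The paper emphasizes precisely this point (``the field $K_{\mathbb{T}}$ is not graded''), and it is the whole reason the regularity assertion has content. So as written, your comparison of coefficients presupposes that $\partial$ already lands in $k[M]$, which is most of what is to be proved.

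The gap is repairable, but it requires one further idea, namely that rational semi-invariants of the torus acting on itself by translation are multiples of characters. From ${}^{t}\partial=\chi^{e}(t)\,\partial$, evaluating on $\chi^{m}$ gives the semi-invariance $t^{*}\bigl(\partial\chi^{m}\bigr)=\chi^{m+e}(t)\,\partial(\chi^{m})$ for all $t$; hence $\chi^{-(m+e)}\partial(\chi^{m})$ is a rational function on $\mathbb{T}$ invariant under all translations, and since $\mathbb{T}$ acts transitively on itself such a function is a constant $c_{m}\in k$ (check over $\overline{k}$ and descend). This yields $\partial(\chi^{m})=c_{m}\chi^{m+e}$ directly, after which your Leibniz computation ($c_{m+m'}=c_{m}+c_{m'}$, so $c_{m}=\delta(m)$ for some $\delta\in\mathrm{Hom}(M,k)$) and the regularity conclusion go through. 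Two further remarks for comparison: the paper does not reprove the equivalence at all but cites the regular-case argument \cite[Lemma~2]{Li}, settling the rational subtlety exactly by the unit/character observation you also make; and in the converse direction, to pass from the paper's definition of homogeneity (homogeneous elements map to homogeneous elements) to ``each $\partial(\chi^{m})$ has degree $m+e$ with $e$ independent of $m$,'' you still need a small Leibniz argument: applying $\partial$ to $\chi^{m}\chi^{m'}=\chi^{m+m'}$, a sum of nonzero multiples of two distinct characters is never homogeneous, which forces the degree shift to be constant on the support of $\partial$.
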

\begin{proof}
The first assertion follows from the same argument as in the regular
case, see e.g. \cite[Lemma~2]{Li}. Since every homogeneous element
in $k[M]$ is invertible, it follows that the only homogeneous elements
in $K_{\mathbb{T}}$ are the characters $\chi^{m}$, $m\in M$, which
are regular functions on $\mathbb{T}$.
\end{proof}
Regular homogeneous $k$-derivations on $\mathbb{T}$ were already
described in \cite{De}, see also \cite[Proposition~3.1]{KKL}. Let
$p\in N$ and let $e\in M$. The linear map $\partial_{p,e}:k[M]\rightarrow k[M]$,
$\chi^{m}\mapsto p(m)\chi^{m+e}$ is a homogeneous $k$-derivation
on $\mathbb{T}$ and every homogeneous $k$-derivation on $\mathbb{T}$
is a multiple $\partial_{p,e}$ for some $e\in M$ and some $p\in N$.
Without loss of generality we may assume that $p$ is primitive.
\begin{lem}
Let $p\in N$ be a primitive vector and let $e\in M$. The $k$-derivation
$\partial_{p,e}$ is rationally integrable if and only if $p(e)=\pm1$.\end{lem}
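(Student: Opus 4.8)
The plan is to compute the formal exponential $\exp(t\partial_{p,e})$ explicitly on characters and to read off rational integrability from the criterion recorded in the remark following Theorem~\ref{prop:RatIntegrable}: the derivation $\partial_{p,e}$ is rationally integrable if and only if $\exp(t\partial_{p,e})(f)\in K_{\mathbb{T}}(t)$ for every $f$ in a fixed generating set of $K_{\mathbb{T}}$ over $k$. I would take as generating set the characters $\chi^{m_1},\ldots,\chi^{m_n}$, where $m_1,\ldots,m_n$ is the basis of $M$ dual to a basis $n_1,\ldots,n_n$ of $N$. With this choice the integers $p(m_1),\ldots,p(m_n)$ are exactly the coordinates of $p$ in the basis $\{n_i\}$, so the primitivity of $p$ translates into $\gcd(p(m_1),\ldots,p(m_n))=1$.

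The first step is the closed formula, proved by an immediate induction from $\partial_{p,e}(\chi^{m'})=p(m')\chi^{m'+e}$ together with the linearity $p(m+je)=p(m)+j\,p(e)$:
\[
\partial_{p,e}^{\,k}(\chi^m)=\Big(\textstyle\prod_{j=0}^{k-1}\big(p(m)+j\,p(e)\big)\Big)\chi^{m+ke},\qquad k\geq 0.
\]
Summing the exponential series and matching it against the generalized binomial series then yields, writing $a=p(m)$ and $b=p(e)$,
\[
\exp(t\partial_{p,e})(\chi^m)=\chi^m\big(1-b\,\chi^e t\big)^{-a/b}\ \ (b\neq 0),\qquad
\exp(t\partial_{p,e})(\chi^m)=\chi^m\exp\!\big(a\,\chi^e t\big)\ \ (b=0).
\]

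The heart of the argument, and the step I expect to be the main obstacle, is to decide when these series lie in $K_{\mathbb{T}}(t)$. In the degenerate case $b=0$ with $a\neq 0$, the factor $\exp(a\chi^e t)$ does not lie in $K_{\mathbb{T}}(t)$: a putative equality $\exp(a\chi^e t)=P/Q$ forces $P'Q-PQ'=a\chi^e\,PQ$, and comparing degrees gives $a\chi^e=0$, a contradiction. In the main case $b\neq 0$, I substitute the unit $T=-b\chi^e t$ and reduce to asking when $(1+T)^{r}$, with $r=-a/b\in\mathbb{Q}$, is rational in $T$ over $K_{\mathbb{T}}$. The cleanest way to settle this is through the logarithmic derivative: a rational $g=(1+T)^r$ satisfies $g'/g=r/(1+T)$, and the residue of $g'/g$ at the $K_{\mathbb{T}}$-rational point $T=-1$ equals $\mathrm{ord}_{-1}(g)\in\mathbb{Z}$, while on the other side it equals $r$; hence $(1+T)^r\in K_{\mathbb{T}}(T)$ if and only if $r\in\mathbb{Z}$, that is, if and only if $b\mid a$.

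Combining these observations, $\exp(t\partial_{p,e})(\chi^{m_i})\in K_{\mathbb{T}}(t)$ for all the basis characters if and only if $p(e)\neq 0$ and $p(e)\mid p(m_i)$ for every $i$; by the gcd reformulation of primitivity this is equivalent to $p(e)\mid 1$, i.e.\ $p(e)=\pm 1$, which is the asserted criterion. As a sanity check, when $p(e)=\pm 1$ the formula collapses to $\exp(t\partial_{p,e})(\chi^m)=\chi^m\big(1\mp\chi^e t\big)^{\mp p(m)}\in K_{\mathbb{T}}(t)$, confirming rational integrability. Everything outside the rationality criterion for $(1+T)^r$ (and its degenerate analogue for the exponential) is a direct computation fed into the generator-wise test for rational integrability.
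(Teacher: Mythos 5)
Your proof is correct and follows essentially the same route as the paper: compute $\exp(t\partial_{p,e})$ on characters generating $K_{\mathbb{T}}$ and decide when the resulting binomial-type series is rational, the paper's only shortcut being to use primitivity of $p$ at the outset to choose dual bases in which $p=(1,0,\ldots,0)$, so that the derivation becomes $x_{1}^{e_{1}+1}x_{2}^{e_{2}}\cdots x_{n}^{e_{n}}\frac{\partial}{\partial x_{1}}$, only the generator $x_{1}$ moves, and primitivity never reappears, whereas you keep a general basis and invoke it at the end via $\gcd(p(m_{1}),\ldots,p(m_{n}))=1$. Your residue argument for the non-rationality of $(1+T)^{r}$ when $r\in\mathbb{Q}\setminus\mathbb{Z}$, together with the degree argument excluding $\exp(a\chi^{e}t)$, supplies precisely the verification that the paper compresses into ``a direct computation''.
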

\begin{proof}
Since $\partial_{-p,e}=-\partial_{p,e}$, we may assume without loss
of generality that $p(e)\geq0$. Choosing mutually dual basis for
$M$ and $N$, we may assume $p=(1,0,\ldots,0)$ and $e=(e_{1},\ldots,e_{n})$
with $e_{1}\geq0$. Letting $x_{i}=\chi^{\beta_{i}}$, where $\{\beta_{1},\ldots,\beta_{n}\}$
is the basis for $M$, the $k$-derivation $\partial_{p,e}$ becomes
\[
\partial_{p,e}=x_{1}^{e_{1}+1}x_{2}^{e_{2}}\cdots x_{n}^{e_{n}}\frac{\partial}{\partial x_{1}}.
\]
A direct computation now shows that $\partial_{p,e}$ is rationally
integrable if and only if $e_{1}=p(e)=1$.
\end{proof}
The following lemma gives conditions for a derivation $\partial_{p,e}$
to extend to a regular $k$-derivation of an affine toric variety
$X_{\sigma}$. It was first proven in \cite{De} in a slightly different
form (see also \cite[Proposition~3.1]{KKL} for a modern proof). For
a fan $\Sigma$ or a cone $\sigma$ the notation $\Sigma(1)$ and
$\sigma(1)$ refers to the set of primitive vectors of the rays in
$\Sigma$ and $\sigma$, respectively. 
\begin{lem}
\label{lem:vect-toric}Let $X_{\sigma}$ be an affine toric variety.
Then the homogeneous $k$-derivation $\partial_{p,e}$ on $\mathbb{T}$
extends to a $k$-derivation on $X_{\sigma}$ if and only if 
\begin{enumerate}
\item $e\in\sigma_{M}^{\vee}$, or
\item \label{LND-toric} There exists $\rho_{e}\in\sigma(1)$ such that
$p=\pm\rho_{e}$, $\rho_{e}(e)=-1$, and $\rho(e)\geq0$ for all $\rho\in\sigma(1)\setminus\{\rho_{e}\}$.
 
\end{enumerate}
Furthermore, $\partial_{e,p}$ is locally nilpotent if and only if
it is as in \eqref{LND-toric}. 
\end{lem}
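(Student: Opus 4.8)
The plan is to translate the extension question into pure combinatorics on the monoid $\sigma^{\vee}\cap M$. Writing $A=k[\sigma^{\vee}\cap M]=\Gamma(X_{\sigma},\mathcal O_{X_{\sigma}})$, the characters $\chi^{m}$ with $m\in\sigma^{\vee}\cap M$ generate $A$ as a $k$-algebra, so the rational derivation $\partial_{p,e}$ of $K_{\mathbb T}$ restricts to a $k$-derivation of $A$ if and only if $\partial_{p,e}(\chi^{m})=p(m)\chi^{m+e}\in A$ for every $m\in\sigma^{\vee}\cap M$, that is, if and only if $m+e\in\sigma^{\vee}$ whenever $p(m)\neq 0$. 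Since $\sigma$ is strongly convex it is the conical hull of its rays, $\sigma=\mathrm{Cone}(\sigma(1))$, whence $\sigma^{\vee}=\{u\in M_{\mathbb Q}\mid \rho(u)\geq 0\ \text{for all}\ \rho\in\sigma(1)\}$. The extension condition thus reads
\[
(\ast)\qquad p(m)\neq 0\ \Longrightarrow\ \rho(m)+\rho(e)\geq 0\ \text{for all}\ \rho\in\sigma(1),\qquad m\in\sigma^{\vee}\cap M,
\]
and the whole lemma becomes an analysis of $(\ast)$.

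First I would dispatch the two easy implications. If $e\in\sigma_{M}^{\vee}$, then $m+e$ again lies in the monoid $\sigma^{\vee}\cap M$ for every $m\in\sigma^{\vee}\cap M$, so $(\ast)$ holds with no condition on $p$. If instead $\partial_{p,e}$ is as in \eqref{LND-toric}, then for $m\in\sigma^{\vee}\cap M$ with $p(m)=\pm\rho_{e}(m)\neq 0$ one has $\rho_{e}(m)\geq 1$, hence $\rho_{e}(m+e)\geq 1+\rho_{e}(e)=0$, while $\rho(m+e)=\rho(m)+\rho(e)\geq 0$ for the remaining rays since both summands are nonnegative; thus $(\ast)$ holds again. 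This proves that each of the two conditions is sufficient for the extension.

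The substance is the converse. Assume $(\ast)$ holds and $e\notin\sigma^{\vee}$, and choose a ray $\rho_{0}\in\sigma(1)$ with $\rho_{0}(e)<0$, so $\rho_{0}(e)\leq -1$. Then $(\ast)$ forces $p(m)=0$ for every $m\in\sigma^{\vee}\cap M$ with $\rho_{0}(m)\leq 0$; in particular $p$ vanishes on $(\sigma^{\vee}\cap\rho_{0}^{\perp})\cap M$. Here I would invoke cone duality: as $\rho_{0}$ is a ray of $\sigma$, the dual face $\sigma^{\vee}\cap\rho_{0}^{\perp}$ is a facet of the full-dimensional cone $\sigma^{\vee}$ (full-dimensional because $\sigma$ is strongly convex), so its lattice points span $\rho_{0}^{\perp}$; hence $p\in\mathbb Q\rho_{0}\cap N$, and primitivity of $p$ and $\rho_{0}$ gives $p=\pm\rho_{0}$. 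Because any ray on which $e$ is negative is forced in the same way to equal $\pm p$, the ray $\rho_{0}$ is the unique one with this property; setting $\rho_{e}=\rho_{0}$ then yields $p=\pm\rho_{e}$ together with $\rho(e)\geq 0$ for all $\rho\in\sigma(1)\setminus\{\rho_{e}\}$. The one genuinely delicate point, and the main obstacle, is to upgrade $\rho_{e}(e)\leq -1$ to the equality $\rho_{e}(e)=-1$ demanded by \eqref{LND-toric}. For this I would exhibit a lattice point $m\in\sigma^{\vee}\cap M$ with $\rho_{e}(m)=1$: taking a lattice point $u_{0}$ in the relative interior of the facet $\sigma^{\vee}\cap\rho_{e}^{\perp}$ (so that $\rho_{e}(u_{0})=0$ while $\rho(u_{0})>0$ for every other ray) and any $m_{0}\in M$ with $\rho_{e}(m_{0})=1$ (available since $\rho_{e}$ is primitive), the point $m=m_{0}+Nu_{0}$ belongs to $\sigma^{\vee}\cap M$ for $N\gg 0$ and satisfies $\rho_{e}(m)=1$. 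Feeding such an $m$ into $(\ast)$ gives $1+\rho_{e}(e)\geq 0$, i.e. $\rho_{e}(e)\geq -1$, forcing $\rho_{e}(e)=-1$. The care required is precisely to guarantee that such an $m$ exists even when $\sigma$ is not full-dimensional and $\sigma^{\vee}$ has a nontrivial lineality space; there one checks that each $\rho\neq\rho_{e}$ still restricts to a nonzero functional on $\rho_{e}^{\perp}$, so that the relative interior of the facet is genuinely cut out by the strict inequalities $\rho(u_{0})>0$.

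Finally, for the local-nilpotency assertion I would use the formula, immediate by induction,
\[
\partial_{p,e}^{\,k}(\chi^{m})=\chi^{m+ke}\prod_{j=0}^{k-1}\bigl(p(m)+j\,p(e)\bigr),
\]
so that a power of $\partial_{p,e}$ annihilates $\chi^{m}$ exactly when $p(m)+j\,p(e)=0$ for some $j\geq 0$. If $\partial_{p,e}$ is as in \eqref{LND-toric}, then $p(e)=\mp 1$ while $p(m)=\pm\rho_{e}(m)\in\mathbb Z_{\geq 0}$ for $m\in\sigma^{\vee}\cap M$, so the factor indexed by $j=\rho_{e}(m)$ vanishes and $\partial_{p,e}$ kills a power of each generator, hence is locally nilpotent. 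Conversely suppose $e\in\sigma_{M}^{\vee}$ (the case complementary to \eqref{LND-toric}, since the latter forces $\rho_{e}(e)<0$) and that $\partial_{p,e}$ were locally nilpotent. If $p(e)=0$ the formula forces $p(m)=0$ for all $m\in\sigma^{\vee}\cap M$, impossible as $p\neq 0$ and $\sigma^{\vee}$ is full-dimensional. If $p(e)\neq 0$, local nilpotency forces $p(m)/p(e)\in\mathbb Z_{\leq 0}$ for every $m$, so $p$ has constant sign on $\sigma^{\vee}$ opposite to that of $p(e)$; assuming $p(e)>0$ (the other case being symmetric) this gives $-p\in\sigma$, whence $e\in\sigma^{\vee}$ yields $p(e)\leq 0$, a contradiction. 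Thus $e\in\sigma_{M}^{\vee}$ precludes local nilpotency, and \eqref{LND-toric} is equivalent to it.
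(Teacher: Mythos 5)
Your proof is correct. Bear in mind that the paper itself contains no proof of this lemma: it is stated with a pointer to Demazure \cite{De} and to \cite[Proposition~3.1]{KKL} for a modern proof, so there is no internal argument to compare yours against; what you have written supplies a self-contained proof, and it is essentially the standard one that those references give. Your reduction to the combinatorial condition $(\ast)$ is legitimate because the characters $\chi^{m}$, $m\in\sigma^{\vee}\cap M$, span $k[\sigma^{\vee}\cap M]$ and $\partial_{p,e}$ is linear; the sufficiency of (1) and of \eqref{LND-toric} is a direct check, and your necessity argument --- $p$ kills all lattice points of the facet $\sigma^{\vee}\cap\rho_{0}^{\perp}$, which spans the hyperplane $\rho_{0}^{\perp}$ because $\sigma^{\vee}$ is full-dimensional (strong convexity of $\sigma$), hence $p=\pm\rho_{0}$ by primitivity, after which a lattice point $m\in\sigma^{\vee}\cap M$ with $\rho_{0}(m)=1$ forces $\rho_{0}(e)=-1$ --- is complete, including the existence of such an $m$ (a lattice point in the relative interior of the facet exists by scaling a rational one, and the degenerate case where that facet is $\{0\}$ occurs only for $n=1$, where $\sigma(1)=\{\rho_{e}\}$ and the positivity requirement is vacuous). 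The local nilpotency clause via the formula $\partial_{p,e}^{k}(\chi^{m})=\chi^{m+ke}\prod_{j=0}^{k-1}\bigl(p(m)+j\,p(e)\bigr)$ is also correct, and you rightly exploit that (1) and \eqref{LND-toric} are mutually exclusive (the latter forces $e\notin\sigma^{\vee}$), so that an extendable derivation not of type \eqref{LND-toric} falls under (1), where your sign argument ($-p\in(\sigma^{\vee})^{\vee}=\sigma$, hence $p(e)\leq 0$) excludes local nilpotency. One cosmetic remark: for $m\in\sigma^{\vee}\cap M$ the condition $\rho_{0}(m)\leq 0$ you impose is simply $\rho_{0}(m)=0$, i.e.\ you only ever use the facet itself.
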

By the valuative criterion for properness, a toric variety $X_{\Sigma}$
is semi-affine if and only if $\mathrm{Supp}(\Sigma)=\bigcup_{\sigma\in\Sigma}\sigma$
is convex. We can now apply Corollary~\ref{cor:open-LND} to recover
a description of regular $\mathbb{G}_{a}$-actions on semi-affine
toric varieties which was obtained by Demazure \cite{De} using lengthly
explicit computations. 
\begin{prop}
\label{prop:toric}Let $X_{\Sigma}$ be a semi-affine toric variety.
Then $\partial_{p,e}$ is the derivation of a $\mathbb{T}$-homogeneous
regular $\mathbb{G}_{a}$-actions $\alpha_{p,e}:\mathbb{G}_{a}\times X_{\Sigma}\rightarrow X_{\Sigma}$
on $X_{\Sigma}$ if and only if there exists $\rho_{e}\in\Sigma(1)$
such that $p=\pm\rho_{e}$, $\rho_{e}(e)=-1$, and $\rho(e)\geq0$
for all $\rho\in\Sigma(1)\setminus\{\rho_{e}\}$.     \end{prop}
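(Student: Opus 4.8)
The plan is to combine the per-chart description of homogeneous derivations furnished by Lemma~\ref{lem:vect-toric} with the global criteria of Theorem~\ref{thm:Regular-actions} and Corollary~\ref{cor:open-LND}, exploiting that semi-affineness of $X_\Sigma$ means $\omega:=\mathrm{Supp}(\Sigma)$ is a convex polyhedral cone. The first thing I would record is that in this situation $\Gamma(X_\Sigma,\mathcal{O}_{X_\Sigma})=\bigcap_{\sigma\in\Sigma}k[\sigma^\vee\cap M]=k[\omega^\vee\cap M]$, so that $X_0=\mathrm{Spec}(\Gamma(X_\Sigma,\mathcal{O}_{X_\Sigma}))=X_\omega$ is itself an affine toric variety to which Lemma~\ref{lem:vect-toric} applies, and whose rays $\omega(1)$ form a subset of $\Sigma(1)$. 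I would also note at the outset that conditions (1) and (2) of Lemma~\ref{lem:vect-toric} are mutually exclusive, since (2) forces $\rho_e(e)=-1<0$, hence $e\notin\sigma^\vee$, and that the standing convention $p\ne 0$ primitive excludes the trivial action on both sides.

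For the forward implication, suppose $\partial_{p,e}$ is the derivation of a regular $\mathbb{G}_a$-action. Then $\partial_{p,e}$ extends to a global derivation $\tilde\partial:\mathcal{O}_{X_\Sigma}\to\mathcal{O}_{X_\Sigma}$, so every chart $X_\sigma$ satisfies (1) or (2), and by Theorem~\ref{thm:Regular-actions} the induced derivation $\Gamma(X_\Sigma,\tilde\partial)$ on $k[\omega^\vee\cap M]$ is locally nilpotent. Applying the ``furthermore'' part of Lemma~\ref{lem:vect-toric} to $X_\omega$ yields $\rho_e\in\omega(1)\subseteq\Sigma(1)$ with $p=\pm\rho_e$ and $\rho_e(e)=-1$. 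It then remains to upgrade the inequality $\rho(e)\ge 0$ from the boundary rays $\omega(1)$ to all of $\Sigma(1)$: for an arbitrary $\rho'\in\Sigma(1)\setminus\{\rho_e\}$ I would choose a cone $\sigma'\in\Sigma$ having $\rho'$ as a ray and inspect which of (1), (2) it satisfies. If (1), then $e\in(\sigma')^\vee$ gives $\rho'(e)\ge 0$ at once; if (2), its attached primitive ray $\rho_{\sigma'}$ satisfies $p=\pm\rho_{\sigma'}$, whence $\rho_{\sigma'}=\pm\rho_e$, and the sign $-\rho_e$ is ruled out because $(-\rho_e)(e)=+1\ne -1$, forcing $\rho_{\sigma'}=\rho_e$ and therefore $\rho'(e)\ge 0$ since $\rho'\ne\rho_e$. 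This sign bookkeeping, reconciling the per-chart data into a single global inequality anchored at $\rho_e$, is the only delicate point and is the main obstacle.

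For the converse, assuming the combinatorial condition I would first check that $\partial_{p,e}$ extends to every chart: if $\rho_e\in\sigma(1)$ then condition (2) holds for $\sigma$, its hypotheses being inherited from the global condition restricted to $\sigma(1)\subseteq\Sigma(1)$, while if $\rho_e\notin\sigma(1)$ then $\rho(e)\ge 0$ for all $\rho\in\sigma(1)$ gives $e\in\sigma^\vee$, i.e. condition (1). Hence $\partial_{p,e}$ defines a global derivation $\tilde\partial$ on $\mathcal{O}_{X_\Sigma}$. Finally, since $\rho_e\in\Sigma(1)$ there is a cone $\sigma_0\in\Sigma$ with $\rho_e\in\sigma_0(1)$, for which condition (2) holds, so $\Gamma(X_{\sigma_0},\tilde\partial)$ is locally nilpotent by Lemma~\ref{lem:vect-toric}; Corollary~\ref{cor:open-LND} then guarantees that $\tilde\partial$ defines a regular $\mathbb{G}_a$-action on the semi-affine variety $X_\Sigma$, which is $\mathbb{T}$-homogeneous by construction.
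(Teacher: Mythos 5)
Your converse direction is sound and coincides with the paper's: extend $\partial_{p,e}$ chart by chart, note that the ray $\rho_e$ itself is a cone of $\Sigma$ so that Lemma~\ref{lem:vect-toric} makes $\Gamma(X_{\rho_e},\tilde\partial)$ locally nilpotent, and invoke Corollary~\ref{cor:open-LND}. The problem is the anchor step of your forward direction. You claim that $X_0=\mathrm{Spec}(\Gamma(X_\Sigma,\mathcal{O}_{X_\Sigma}))=\mathrm{Spec}(k[\omega^\vee\cap M])$, with $\omega=\mathrm{Supp}(\Sigma)$, is ``an affine toric variety to which Lemma~\ref{lem:vect-toric} applies, whose rays $\omega(1)$ form a subset of $\Sigma(1)$.'' This is true only when $\omega$ is \emph{strongly} convex; semi-affineness only gives convexity. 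If $\omega$ contains a line --- which happens exactly for the most interesting cases, e.g.\ every complete toric variety --- then $\omega^\vee$ is not full-dimensional, $k[\omega^\vee\cap M]$ is not a toric chart of $\mathbb{T}$, and $\omega$ has no extreme rays at all. Concretely, for $X_\Sigma=\mathbb{P}^1$ and the regular homogeneous action $t\cdot[u{:}v]=[u{:}v+tu]$ (so $p=1$, $e=-1$, $\rho_e=+1$), one has $\Gamma(X_\Sigma,\mathcal{O}_{X_\Sigma})=k$, the induced global derivation is zero (locally nilpotent, but vacuously so), $\omega(1)=\emptyset$, and your argument produces no ray $\rho_e$ whatsoever, although the conclusion of the proposition is true and still needs proof. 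The same failure occurs for $\mathbb{P}^1\times\mathbb{A}^1$, $\mathbb{P}^n$, etc.

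The paper avoids this by anchoring $\rho_e$ not in the ring of global functions but in an invariant affine chart: by Corollary~\ref{cor:open-LND} (more precisely, by Rosenlicht's theorem as used in the proof of Theorem~\ref{thm:Regular-actions}) a regular action admits a $\mathbb{G}_a$-invariant affine open subset $U$ on which the derivation is locally nilpotent, and since the action is $\mathbb{T}$-homogeneous one may take $U$ to be $\mathbb{T}$-invariant as well; a $\mathbb{T}$-invariant affine open subset of $X_\Sigma$ is a chart $X_{\sigma_0}$ for some $\sigma_0\in\Sigma$, and Lemma~\ref{lem:vect-toric} applied to $\sigma_0$ (a genuine strongly convex cone) then yields $\rho_e\in\sigma_0(1)\subseteq\Sigma(1)$ with $p=\pm\rho_e$ and $\rho_e(e)=-1$. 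With that anchor in hand, your ray-by-ray bookkeeping --- each chart containing a given $\rho'\neq\rho_e$ satisfies (1) or (2) of Lemma~\ref{lem:vect-toric}, and in case (2) its distinguished ray must equal $\rho_e$ because the sign $-\rho_e$ is excluded by $(-\rho_e)(e)=+1$ --- is exactly what is needed to propagate $\rho(e)\geq 0$ to all of $\Sigma(1)$, and is in fact a detail the paper's terse ``now the proposition follows'' leaves to the reader. So the repair is to replace your global-sections reduction by the invariant-chart reduction; your argument as written is only valid when $\mathrm{Supp}(\Sigma)$ is strongly convex.
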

\begin{proof}
By Corollary~\ref{cor:open-LND}, the $k$-derivation $\partial_{p,e}$
is the derivation of a $\mathbb{T}$-homogeneous regular $\mathbb{G}_{a}$-action
if and only if there exists an affine open $\mathbb{G}_{a}$-invariant
subset $U\subseteq X_{\Sigma}$ such that $\Gamma(U,\tilde{\partial})$
is locally nilpotent. Since the action is $\mathbb{T}$-homogeneous,
we can assume that $U$ is also $\mathbb{T}$-invariant. Now the proposition
follows from Lemma~\ref{lem:vect-toric}.
\end{proof}

\subsection{Rational $\mathbb{G}_{a}$-actions associated with affine-linear
bundles of rank one}

Here we consider a class of rational $\mathbb{G}_{a}$-actions coming
from regular actions of certain non constant groups schemes, locally
isomorphic to $\mathbb{G}_{a}$. We characterize the simplest possible
ones in terms of their corresponding rationally integrable $k$-derivations. 

Let us first note that every line bundle $p:L\rightarrow Z$ over
a $k$-variety $Z$ carries a canonical rationally integrable $\mathcal{O}_{Z}$-derivation
$d_{L/Z}:\mathcal{O}_{L}\rightarrow\Omega_{L/Z}^{1}\hookrightarrow\mathcal{K}_{L}$
with the property that over every affine open subset $Z_{i}$ on which
$L$ becomes trivial, the $\Gamma(Z_{i},\mathcal{O}_{Z_{i}})$-derivation
\[
\Gamma(p^{-1}(Z_{i}),d_{L/Z}):\Gamma(p^{-1}(Z_{i}),\mathcal{O}_{L})\rightarrow\Gamma(p^{-1}(Z_{i}),\Omega_{L/X}^{1})\simeq\Gamma(p^{-1}(Z_{i}),\mathcal{O}_{L})
\]
is locally nilpotent. Indeed, writing $p:L=\mathrm{Spec}_{Z}(\mathrm{Sym}_{Z}\mathcal{L}^{\vee})\rightarrow Z$
for a certain invertible sheaf $\mathcal{L}$, we have $\Omega_{L/Z}^{1}\simeq p^{*}\mathcal{L}^{\vee}$
and for every affine open subset $Z_{i}$ of $Z$ over which $L$-becomes
trivial, say $L\mid_{Z_{i}}\simeq\mathrm{Spec}(\mathcal{O}_{Z_{i}}[s_{i}])$,
$\Gamma(p^{-1}(Z_{i}),d_{L/Z})$ coincides with the derivation $\frac{\partial}{\partial s_{i}}$.
\\

A line bundle is in fact a group scheme over $Z$, locally isomorphic
to $\mathbb{G}_{a,Z}=\mathbb{G}_{a}\times_{\mathrm{Spec}(k)}Z$, whose
group law $m:L\times_{Z}L\rightarrow L$ is induced by the diagonal
homomorphism $\mathcal{L}\rightarrow\mathcal{L}\oplus\mathcal{L}$
of the invertible sheaf $\mathcal{L}$ of germs of sections of $p:L\rightarrow Z$,
and whose neutral section $e:Z\rightarrow L$ corresponds to the zero
section of $\mathcal{L}$. In this context, the correspondence between
regular $\mathbb{G}_{a}$-actions of an affine variety $X=\mathrm{Spec}(A)$
and locally nilpotent $k$-derivation of $A$ extends to a correspondence
between regular actions $\mu:L\times_{Z}X\rightarrow X$ of $L$ on
a variety $q:X\rightarrow Z$ affine over $Z$ and ``locally nilpotent''
$\mathcal{O}_{Z}$-derivations $\tilde{\partial}:\mathcal{O}_{X}\rightarrow q^{*}\mathcal{L}^{\vee}$.
Namely, the derivation $\tilde{\partial}$ is the composition of the
canonical $\mathcal{O}_{Z}$-derivation $d_{X/Z}:\mathcal{O}_{X}\rightarrow\Omega_{X/Z}^{1}$
with the homomorphism of $\mathcal{O}_{X}$-module $\Omega_{X/Z}^{1}\rightarrow q^{*}\mathcal{L}^{\vee}$
obtained similarly as in subsection \ref{sub:Rat-vectorFields} above
by pulling back the homomorphism $\eta:\mu^{*}\Omega_{X/Z}^{1}\rightarrow\Omega_{L\times_{Z}X/X}^{1}\simeq\mathrm{pr}_{X}^{*}q^{*}\mathcal{L}^{\vee}$
of $\mathcal{O}_{L\times_{Z}X}$-module by the zero section morphism
$e\times\mathrm{id}_{X}:X\rightarrow L\times_{Z}X$. This derivation
is locally nilpotent in the sense that $q_{*}\mathcal{O}_{X}$ is
the union of the kernels of the $\mathcal{O}_{Z}$-linear homomorphisms
$\partial_{L,X}^{n}:q_{*}\mathcal{O}_{X}\rightarrow q_{*}\mathcal{O}_{L}\otimes_{\mathcal{O}_{Z}}(\mathcal{L}^{\vee})^{\otimes n}$,
$n\geq1$, defined inductively by $\partial_{L,X}^{1}=q_{*}\tilde{\partial}:q_{*}\mathcal{O}_{X}\rightarrow q_{*}q^{*}\mathcal{L}^{\vee}\simeq q_{*}\mathcal{O}_{X}\otimes_{\mathcal{O}_{Z}}\mathcal{L}^{\vee}$
and, for every $n\geq2$, as the composition $\partial_{L,Z}^{n}=(\partial_{L,Z}^{1}\otimes\mathrm{id})\circ\partial_{L,X}^{n-1}$
where 
\[
(\partial_{L,Z}^{1}\otimes\mathrm{id}):q_{*}\mathcal{O}_{X}\otimes_{\mathcal{O}_{Z}}(\mathcal{L}^{\vee})^{\otimes n-1}\rightarrow(q_{*}\mathcal{O}_{X}\otimes_{\mathcal{O}_{Z}}\mathcal{L}^{\vee})\otimes_{\mathcal{O}_{Z}}(\mathcal{L}^{\vee})^{\otimes n-1}\simeq q_{*}\mathcal{O}_{X}\otimes_{\mathcal{O}_{Z}}(\mathcal{L}^{\vee})^{\otimes n}.
\]
The action $\mu:L\times_{Z}X\rightarrow X$ is then recovered as the
morphism induced by the formal exponential homomorphism 
\[
\exp(t\partial_{L,X})=\sum_{n\geq0}\frac{\partial_{L,X}^{n}}{n!}t^{n}:q_{*}\mathcal{O}_{X}\rightarrow q_{*}\mathcal{O}_{X}\otimes_{\mathcal{O}_{Z}}(\bigoplus_{n\geq0}(\mathcal{L}^{\vee})^{\otimes n}t^{n})\simeq q_{*}\mathcal{O}_{X}\otimes_{\mathcal{O}_{Z}}\mathrm{Sym}_{Z}\mathcal{L}^{\vee}.
\]
\\

The simplest examples of varieties admitting an action of a line bundle
$p:L\rightarrow Z$ are principal homogeneous $L$-bundles, that is,
varieties $q:X\rightarrow Z$ equipped with an action of $L$ which
are locally equivariantly isomorphic over $Z$ to $L$ acting on itself
by translations. For such varieties, the corresponding rationally
integrable $\mathcal{O}_{Z}$-derivations $\tilde{\partial}:\mathcal{O}_{X}\rightarrow q^{*}\mathcal{L}^{\vee}$
have the additional property that there exists a covering of $Z$
by affine open subset subsets $Z_{i}\subset Z$ on which $\mathcal{L}$
becomes trivial and such that the induced derivation 
\[
\Gamma(q^{-1}(Z_{i}),\tilde{\partial}):\Gamma(q^{-1}(Z_{i}),\mathcal{O}_{X})\rightarrow\Gamma(q^{-1}(Z_{i}),q^{*}\mathcal{L}^{\vee})\simeq\Gamma(q^{-1}(Z_{i}),\mathcal{O}_{X})
\]
is locally nilpotent, with a regular slice $s_{i}\in\Gamma(q^{-1}(Z_{i}),\mathcal{O}_{X})$.
The following Proposition shows conversely that the existence on a
variety $X$ of a structure of principal homogeneous bundle under
a suitable line bundle $p:L\rightarrow Z$ can be decided, without
prior knowledge of $L$ and $Z$, from the consideration of certain
rationally integrable $k$-derivations $\tilde{\partial}:\mathcal{O}_{X}\rightarrow\mathcal{K}_{X}$. 
\begin{prop}
\label{prop:Torsor-derivation} Let $X$ be a $k$-variety and let
$\tilde{\partial}:\mathcal{O}_{X}\rightarrow\mathcal{N}$ be a rationally
integrable $k$-derivation with value in an invertible subsheaf $\mathcal{N}$
of $\mathcal{K}_{X}$. Suppose that there exists a covering of $X$
by affine open subsets $X_{i}$, $i\in I$, and trivializations $\psi_{i}:\mathcal{N}\mid_{X_{i}}\stackrel{\sim}{\rightarrow}\mathcal{O}_{X_{i}}$
such that the following holds 

a) For every $i\in I$, the $k$-derivation $\Gamma(X_{i},\psi_{i}\circ\tilde{\partial}):\Gamma(X_{i},\mathcal{O}_{X})\rightarrow\Gamma(X_{i},\mathcal{O}_{X})$
is locally nilpotent with a regular slice $s_{i}\in\Gamma(X_{i},\mathcal{O}_{X})$. 

b) For every $i,j\in I$, the invertible function $\psi_{i}\circ\psi_{j}^{-1}\mid_{X_{i}\cap X_{j}}\in\Gamma(X_{i}\cap X_{j},\mathcal{O}_{X}^{*})$
is contained in $\mathrm{Ker}(\Gamma(X_{i}\cap X_{j},\tilde{\partial}))$. 

Then there exists a geometrically integral scheme $Z$ of finite type
over $k$, a morphism $q:X\rightarrow Z$ and an invertible sheaf
$\mathcal{L}$ on $Z$ such that $\mathcal{N}\simeq q^{*}\mathcal{L}^{\vee}$
and $q:X\rightarrow Z$ is a principal homogeneous bundle under the
line bundle $p:\mathrm{Spec}_{Z}(\mathrm{Sym}_{Z}\mathcal{L}^{\vee})\rightarrow Z$. \end{prop}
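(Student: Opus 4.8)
The plan is to reconstruct the base $Z$, the line bundle $\mathcal{L}$ and the torsor structure directly from the local data, the guiding principle being that the transition functions $g_{ij}=\psi_i\circ\psi_j^{-1}$ of the invertible sheaf $\mathcal{N}$ are forced by hypothesis b) to be \emph{invariant}, so that, together with suitable invariant translation terms, they descend to an affine-linear gluing cocycle over the base to be constructed. First I would extract the local structure. Writing $A_i=\Gamma(X_i,\mathcal{O}_X)$ and $\partial_i=\Gamma(X_i,\psi_i\circ\tilde{\partial})\colon A_i\rightarrow A_i$, hypothesis a) says $\partial_i$ is locally nilpotent with slice $s_i$; by the slice theorem for locally nilpotent derivations, $A_i=B_i[s_i]$ is a polynomial ring over $B_i=\ker\partial_i=\Gamma(X_i,\ker\tilde{\partial})$. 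Since $A_i$ is a finitely generated $k$-algebra and is a polynomial ring over $B_i$, the subring $B_i$ is itself finitely generated, so $Z_i=\mathrm{Spec}(B_i)$ is an affine variety, $X_i\simeq Z_i\times\mathbb{A}^1$, and the projection $q_i\colon X_i\rightarrow Z_i$ is a trivial $\mathbb{G}_a$-torsor for the translation action integrating $\partial_i$.

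Next I would compute the transition cocycle. On $X_i\cap X_j$ one has $\partial_i=\psi_i\circ\tilde{\partial}=g_{ij}\,(\psi_j\circ\tilde{\partial})=g_{ij}\partial_j$ with $g_{ij}\in\Gamma(X_i\cap X_j,\mathcal{O}_X^{*})$, and hypothesis b), i.e. $\tilde{\partial}(g_{ij})=0$, means that $g_{ij}$ is a nowhere-vanishing invariant; hence $\partial_i$ and $\partial_j$ define the \emph{same} orbits on the overlap. A Leibniz computation then gives $\tilde{\partial}(s_j-g_{ij}s_i)=0$, that is
\[
s_j=g_{ij}s_i+h_{ij},\qquad h_{ij}\in\Gamma(X_i\cap X_j,\ker\tilde{\partial}).
\]
Comparing the three such relations on triple overlaps and using that $s_i$ is a fibre coordinate yields the cocycle identities $g_{ij}g_{jk}=g_{ik}$ and $h_{ik}=g_{jk}h_{ij}+h_{jk}$, which are precisely the descent data of a principal homogeneous bundle under a line bundle.

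I would then build the base and glue. Set $W_{ij}=q_i(X_i\cap X_j)$, an open subset of $Z_i$. Since each invariant function lies in the common field $K_0=\mathrm{Frac}(B_i)$ of invariant rational functions and is regular along the connected fibres of $q_i$ over $W_{ij}$, the elements $g_{ij},h_{ij}$ descend to $\mathcal{O}_{Z_i}(W_{ij})=\bigcap_{z\in W_{ij}}\mathcal{O}_{Z_i,z}$, and the induced identifications $\theta_{ij}\colon W_{ji}\stackrel{\sim}{\rightarrow}W_{ij}$ of these models of $K_0$ are isomorphisms of schemes with $\theta_{ij}\theta_{jk}=\theta_{ik}$. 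Gluing the $Z_i$ along the $\theta_{ij}$ produces a geometrically integral scheme $Z$ of finite type over $k$, separated because $X$ is, and the $q_i$ glue to a morphism $q\colon X\rightarrow Z$. The invariant units $g_{ij}$, now regular on $W_{ij}\subseteq Z$, form a $1$-cocycle defining a line bundle $\mathcal{L}^{\vee}$ with $q^{*}\mathcal{L}^{\vee}\simeq\mathcal{N}$. Gluing the trivial pieces $Z_i\times\mathbb{A}^1$ along the affine-linear maps $(z,s_i)\mapsto(\theta_{ij}(z),g_{ij}(z)s_i+h_{ij}(z))$ yields a principal homogeneous bundle $\pi\colon T\rightarrow Z$ under $L=\mathrm{Spec}_{Z}(\mathrm{Sym}_{Z}\mathcal{L}^{\vee})$; the identifications $X_i=Z_i\times\mathbb{A}^1=\pi^{-1}(Z_i)$ are compatible on overlaps by the previous step and assemble into an open immersion $X\hookrightarrow T$, which is surjective because the images of the $X_i$ already exhaust $\bigcup_i\pi^{-1}(Z_i)=T$. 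Thus $X\simeq T$ and $q\colon X\rightarrow Z$ is the desired $L$-torsor.

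The main obstacle is the construction of $Z$: turning the purely algebraic, fibrewise descent of the invariant transition data $g_{ij},h_{ij}$ into an honest gluing of the affine quotients $Z_i$ into a \emph{separated} scheme. The delicate points are that the covering $\{X_i\}$ need not be saturated for the local $\mathbb{G}_a$-actions, so that an orbit may meet an overlap in a proper open subset of an affine line; one must therefore verify carefully that $W_{ij}$ is open, that the invariant functions genuinely descend to regular functions on it, and above all that the glued $Z$ is separated. The surjectivity argument in the last step is exactly what absorbs this possible non-saturation and shows that $X$ itself, and not merely an open subscheme of it, already carries the full torsor structure.
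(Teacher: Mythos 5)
Your overall strategy (local slice decompositions $X_i \simeq Z_i \times \mathbb{A}^1$, the relations $s_j = g_{ij}s_i + h_{ij}$, gluing the quotients $Z_i$, and reading off the cocycle $(g_{ij})$) is the same as the paper's, and your Leibniz computation $\tilde{\partial}(s_j - g_{ij}s_i) = 0$ is correct. But the point you yourself flag as delicate --- possible non-saturation of the overlaps $X_i \cap X_j$ for the local $\mathbb{G}_a$-actions --- is a genuine gap in your argument, and your claim that the final surjectivity step ``absorbs'' it is wrong. If some orbit of $\alpha_i$ met $X_i \cap X_j$ in a proper (possibly disconnected) open subset, then: (i) the invariant functions $g_{ij}, h_{ij}$ need not descend to $W_{ij} = q_i(X_i\cap X_j)$, since an element of $\mathrm{Ker}(\Gamma(X_i\cap X_j,\tilde{\partial}))$ is only locally constant along the foliation and can take different constant values on different components of an orbit-overlap intersection, hence need not be a pullback from the image; and (ii) even granting the construction of the model torsor $\pi:T\rightarrow Z$, the natural map $X\rightarrow T$ would then identify points of $X_i\setminus X_j$ and $X_j\setminus X_i$ lying over the same point of $W_{ij}$ with equal fibre coordinate, so it would be surjective but not injective, and $X\not\simeq T$. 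Surjectivity cannot repair this; what has to be shown is that non-saturation simply does not occur under hypotheses a) and b).

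That is precisely the key step of the paper's proof, which your write-up is missing. Since $\partial_i\mid_{X_i\cap X_j}=a_{ij}\,\partial_j\mid_{X_i\cap X_j}$ with $a_{ij}$ invariant (hypothesis b)), every element of the canonical image of $\Gamma(X_i,\mathcal{O}_X)$ or of $\Gamma(X_j,\mathcal{O}_X)$ in $\Gamma(X_i\cap X_j,\mathcal{O}_X)$ is killed by a power of $\partial_i$; and because $X$ is separated, $\Gamma(X_i\cap X_j,\mathcal{O}_X)$ is generated by these two images \cite[I.5.5.6]{EGA}. Hence $\partial_i\mid_{X_i\cap X_j}$ and $\partial_j\mid_{X_i\cap X_j}$ are locally nilpotent derivations of $\Gamma(X_i\cap X_j,\mathcal{O}_X)$, so they integrate to regular $\mathbb{G}_a$-actions on the affine open set $X_i\cap X_j$, and by uniqueness of integration this forces $X_i\cap X_j$ to be stable under both $\alpha_i$ and $\alpha_j$. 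With this saturation in hand, your descent of $g_{ij},h_{ij}$, the gluing isomorphisms $\theta_{ij}$, and the identification $X\simeq T$ all go through essentially as you wrote them. One further correction: your parenthetical claim that $Z$ is ``separated because $X$ is'' is false --- the proposition only asserts that $Z$ is a geometrically integral scheme, and in the paper's own example following the proposition the base $Z$ is the affine line with a doubled origin, which is not separated.
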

\begin{proof}
Letting $\alpha_{i}:\mathbb{G}_{a}\times X_{i}\rightarrow X_{i}$
be the $\mathbb{G}_{a}$-action generated by the $k$-derivation $\partial_{i}=\Gamma(X_{i},\psi_{i}\circ\tilde{\partial})$
and $Z_{i}=\mathrm{Spec}(\Gamma(X_{i},\mathcal{O}_{X})/(s_{i}))\subset X_{i}$,
the first hypothesis implies that $\Phi_{i}:\mathbb{G}_{a}\times Z_{i}\rightarrow X_{i}$,
$(t,z_{i})\mapsto\alpha_{i}(t,z_{i})$ is a $\mathbb{G}_{a}$-equivariant
isomorphism between $\mathbb{G}_{a}\times Z_{i}$ equipped with the
action by translations on the first factor and $X_{i}$ equipped with
the action $\alpha_{i}$. By definition, $\partial_{i}\mid_{X_{i}\cap X_{j}}=a_{ij}\partial_{j}\mid_{X_{i}\cap X_{j}}$
where $a_{ij}=\psi_{i}\circ\psi_{j}^{-1}\mid_{X_{i}\cap X_{j}}\in\Gamma(X_{i}\cap X_{j},\mathcal{O}_{X}^{*})$
and condition b) says in particular that $a_{ij}\in\mathrm{Ker}\partial_{i}\mid_{X_{i}\cap X_{j}}=\mathrm{Ker}\partial_{j}\mid_{X_{i}\cap X_{j}}$.
This implies in turn that every element of $\Gamma(X_{i}\cap X_{j},\mathcal{O}_{X})$
which is in the canonical image of $\Gamma(X_{i},\mathcal{O}_{X})$
or $\Gamma(X_{j},\mathcal{O}_{X})$ is annihilated by a certain power
of $\partial_{i}$. Since $X$ is separated, $\Gamma(X_{i}\cap X_{j},\mathcal{O}_{X})$
is generated by these canonical images \cite[I.5.5.6]{EGA} and so
$\partial_{i}\mid_{X_{i}\cap X_{j}}$ and $\partial_{j}\mid_{X_{i}\cap X_{j}}$
are locally nilpotent derivations of $\Gamma(X_{i}\cap X_{j},\mathcal{O}_{X})$.
This shows that $X_{i}\cap X_{j}$ is stable under the $\mathbb{G}_{a}$-actions
$\alpha_{i}$ on $X_{i}$ and $\alpha_{j}$ on $X_{j}$. Therefore
there exists open subsets $Z_{ij}\simeq\mathrm{Spec}(\mathrm{Ker}\partial_{i}\mid_{X_{i}\cap X_{j}})$
and $Z_{ji}\simeq\mathrm{Spec}(\mathrm{Ker}\partial_{j}\mid_{X_{i}\cap X_{j}})$
of $Z_{i}$ and $Z_{j}$ respectively such that $X_{i}\cap X_{j}$
is simulnatenously $\mathbb{G}_{a}$-equivariantly isomorphic to $\mathrm{Spec}_{Z_{ij}}(\mathcal{O}_{Z_{ij}}[s_{i}])$
and $\mathrm{\mathrm{Spec}_{Z_{ji}}(\mathcal{O}_{Z_{ji}}[s_{j}])}$
with respect to the action $\alpha_{i}$ and $\alpha_{j}$. Furthermore,
since $a_{ij}\in\mathrm{Ker}\partial_{i}\mid_{X_{i}\cap X_{j}}$ we
have 
\[
\partial_{i}\mid_{X_{i}\cap X_{j}}(a_{ij}s_{i})=a_{ij}\partial_{i}\mid_{X_{i}\cap X_{j}}(s_{i})=a_{ij}=\partial_{i}\mid_{X_{i}\cap X_{j}}(s_{j})
\]
and so, there exists $b_{ij}\in\mathrm{Ker}\partial_{i}\mid_{X_{i}\cap X_{j}}=\mathrm{Ker}\partial_{j}\mid_{X_{i}\cap X_{j}}$
such that $s_{j}\mid_{X_{i}\cap X_{j}}=a_{ij}s_{i}\mid_{X_{i}\cap X_{j}}+b_{ij}$.
The same argument applied to a triple intersection $X_{i}\cap X_{j}\cap X_{k}$
shows that the natural isomorphisms $\varphi_{ij}:Z_{ji}\stackrel{\sim}{\rightarrow}Z_{ij}$
induced by the equality $\mathrm{Ker}\partial_{i}\mid_{X_{i}\cap X_{j}}=\mathrm{Ker}\partial_{j}\mid_{X_{i}\cap X_{j}}$
satisfy $\varphi_{jk}(Z_{ki}\cap Z_{kj})\subset Z_{jk}\cap Z_{ji}$
and $\varphi_{ik}\mid_{Z_{ki}\cap Z_{kj}}=\varphi_{ij}\mid_{Z_{jk}\cap Z_{ji}}\circ\varphi_{jk}\mid_{Z_{ki}\cap Z_{kj}}$.
This implies the existence of a unique $k$-scheme $Z$ together with
open immersions $\zeta_{i}:Z_{i}\hookrightarrow Z$ such that $\xi_{i}\circ\varphi_{ij}=\xi_{j}$.
Furthermore, the local projections $\mathrm{pr}_{Z_{i}}:X_{i}\simeq Z_{i}\times\mathbb{A}^{1}\rightarrow Z_{i}$
glue to a locally trivial $\mathbb{A}^{1}$-bundle $q:X\rightarrow Z$
with trivializations $\rho^{-1}(Z_{i})\simeq\mathrm{Spec}_{Z_{i}}(\mathcal{O}_{Z_{i}}[s_{i}])$,
$i\in I$, where we identified $Z_{i}$ with its image in $Z$. The
invertible functions $a_{ij}\in\Gamma(X_{i}\cap X_{j},\mathcal{O}_{X}^{*})\cap\mathrm{Ker}\partial_{i}\mid_{X_{i}\cap X_{j}}\simeq\Gamma(Z_{i}\cap Z_{j},\mathcal{O}_{Z}^{*})$
form a \v{C}ech $1$-cocycle with value in $\mathcal{O}_{Z}^{*}$
defining a unique invertible sheaf $\mathcal{L}^{\vee}$ such that
$\mathcal{N}\simeq q^{*}\mathcal{L}^{\vee}$, and the identity $s_{j}\mid_{X_{i}\cap X_{j}}=a_{ij}s_{i}\mid_{X_{i}\cap X_{j}}+b_{ij}$
says precisely that $q:X\rightarrow Z$ is in fact a principal homogeneous
bundle under the line bundle $p:\mathrm{Spec}_{Z}(\mathrm{Sym}_{Z}\mathcal{L^{\vee}})\rightarrow Z$
.\end{proof}
\begin{example}
Let $S$ be the smooth affine surface in $\mathbb{A}^{4}=\mathrm{Spec}(\mathbb{C}[x,y,z,u])$
defined by the equations 
\[
\begin{cases}
xz & =y(y-1)\\
yu & =z(z+1)\\
xu & =(y-1)(z+1)
\end{cases}
\]
and let $\partial,\partial':A=\Gamma(S,\mathcal{O}_{S})\rightarrow K_{S}$
be the $k$-derivations defined respectively by 
\begin{eqnarray*}
\begin{cases}
\partial x & =0\\
\partial y & =x^{2}\\
\partial z & =(2y-1)x\\
\partial u & =x(z+1)+(2y-1)(y-1)
\end{cases} & \textrm{and} & \begin{cases}
\partial'x & =\omega^{3}\\
\partial'y & =\omega^{2}\\
\partial'z & =\omega\\
\partial'u & =1
\end{cases}
\end{eqnarray*}
where $\omega=x/(y-1)\in K_{S}$. 

It is straightforward to check that $\partial$ is a locally nilpotent
$\mathbb{C}[x]$-derivation of $A$, thus defining a regular $\mathbb{G}_{a}$-action
$\alpha:\mathbb{G}_{a}\times S\rightarrow S$. The surface $S$ is
covered by the two $\mathbb{G}_{a}$-invariant affine open subsets
\begin{eqnarray*}
S_{0}=S\setminus\left\{ x=y-1=0\right\} \simeq\mathrm{Spec}(\mathbb{C}[x,v_{0}]) & \textrm{ and } & S_{1}=S\setminus\left\{ x=y=z+1=0\right\} \simeq\mathrm{Spec}(\mathbb{C}[x,v_{1}])
\end{eqnarray*}
where $v_{0}$ and $v_{1}$ denote the restriction to $S_{0}$ of
the rational functions $(y-x)/x^{2}$ and $\omega^{-1}$. The restrictions
of $\partial$ to $S_{0}$ and $S_{1}$ coincide respectively the
locally nilpotent derivations $\frac{\partial}{\partial v_{0}}$ and
$x\frac{\partial}{\partial v_{1}}$. So letting $C_{1}\subset S$
be the curve $\left\{ x=y-1=0\right\} $, we see that the derivation
of $\mathcal{O}_{S}$ into itself associated to $\partial$ factors
through a derivation $\tilde{\partial}:\mathcal{O}_{S}\rightarrow\mathcal{N}=\mathcal{O}_{S}(-C_{1})$.
By definition, $\mathcal{N}\mid_{S_{0}}=\mathcal{O}_{S_{0}}$, $\mathcal{N}\mid_{S_{1}}=x\mathcal{O}_{S_{1}}$
and using the isomorphisms $\psi_{0}=\mathrm{id}_{\mathcal{O}_{S_{0}}}$
and $\psi_{1}:x\mathcal{O}_{S_{1}}\rightarrow\mathcal{O}_{S_{1}}$,
$x\mapsto1$, we obtain that the two derivations $\partial_{0}=\Gamma(S_{0},\psi_{0}\circ\tilde{\partial})=\frac{\partial}{\partial v_{0}}$
and $\partial_{1}=\Gamma(S_{1},\psi_{1}\circ\tilde{\partial})=\frac{\partial}{\partial v_{1}}$
are locally nilpotent with respective slices $s_{0}=v_{0}$ and $s_{1}=v_{1}$
and respective geometric quotients $S_{0}/\mathbb{G}_{a}=S_{1}/\mathbb{G}_{a}=\mathrm{Spec}(\mathbb{C}[x])$.
Since $x^{-1}\in\Gamma(S_{0}\cap S_{1},\mathcal{O}_{S}^{*})=\mathbb{C}[x^{\pm1}]$
belongs to $\mathrm{Ker}(\Gamma(S_{0}\cap S_{1},\tilde{\partial}))$,
the hypothesis of Proposition \ref{prop:Torsor-derivation} are satisfied.
In this example, the corresponding scheme $Z$ is isomorphic to the
affine line with a double origin, obtained by gluing $S_{0}/\mathbb{G}_{a}$
and $S_{1}/\mathbb{G}_{a}$ by the identity outside their respective
origins $o_{0}$ and $o_{1}$, and $\mathcal{L}^{\vee}=\mathcal{O}_{Z}(-o_{1})$.
The initial $\mathbb{G}_{a}$-action defined by $\partial$ is recovered
from the action $\mu:L\times_{Z}S\rightarrow S$ of $L=\mathrm{Spec}_{Z}(\mathrm{Sym}_{Z}\mathcal{L})\rightarrow Z$
as the composition 
\[
\alpha=\mu\circ(\sigma\times\mathrm{id}_{S}):\mathbb{G}_{a}\times S\simeq\mathbb{G}_{a,Z}\times_{Z}S\rightarrow L\times_{Z}S\rightarrow S
\]
where $\sigma:\mathbb{G}_{a,Z}=\mathbb{G}_{a}\times_{\mathrm{Spec}(\mathbb{C})}Z=\mathrm{Spec}_{Z}(\mathcal{O}_{Z}[t])\rightarrow L=\mathrm{Spec}_{Z}(\mathrm{Sym}_{Z}\mathcal{L}^{\vee})$
is the group scheme homomorphism induced by the canonical global section
$\sigma$ of $\mathcal{O}_{Z}(o_{1})=\mathcal{H}om_{Z}(\mathcal{L}^{\vee},\mathcal{O}_{Z})$
with divisor equal to $o_{1}$. 

The second derivation $\partial'$ is not locally nilpotent. However,
noting that $\partial'\omega=0$ and that the restriction of $\partial'$
to the open subset $S_{1}\simeq\mathrm{Spec}(\mathbb{C}[x,v_{1}])$
coincides with the derivation $v_{1}^{-3}\frac{\partial}{\partial x}=\omega^{3}\frac{\partial}{\partial x}$,
we conclude that the associated derivation $\tilde{\partial}:\mathcal{O}_{S}\rightarrow\mathcal{K}_{S}$
is rationally integrable. Furthermore $\partial'$ restricts on the
open subset $S_{0}'=S\setminus\left\{ y-1=z=u=0\right\} \simeq\mathrm{Spec}(\mathbb{C}[u,v_{0}'])$,
where $v_{0}'=\omega\mid_{S_{0}'}$ to the locally nilpotent derivation
$\frac{\partial}{\partial u}$. The open subsets $S_{0}'$ and $S_{1}$
cover $S$ and letting $C_{0}'\subset S$ be the curve $\left\{ y-1=z=u=0\right\} $,
we see that $\tilde{\partial}$ factors through the invertible subsheaf
$\mathcal{N}'=\mathcal{O}_{S}(3C_{0}')$ of $\mathcal{K}_{S}$. By
definition, $\mathcal{N}'\mid_{S_{0}'}=\mathcal{O}_{S_{0}'}$, $\mathcal{N}'\mid_{S_{1}}=\omega^{-3}\mathcal{O}_{S_{1}}$
and using the isomorphisms $\psi_{0}'=\mathrm{id}_{\mathcal{O}_{S_{0}'}}$
and $\psi_{1}':\omega^{-3}\mathcal{O}_{S_{1}}\rightarrow\mathcal{O}_{S_{1}}$,
$\omega^{-3}\mapsto1$, we obtain that the two derivations $\partial_{0}'=\Gamma(S_{0}',\psi_{0}'\circ\tilde{\partial}')=\frac{\partial}{\partial u}$
and $\partial_{1}'=\Gamma(S_{1},\psi_{1}'\circ\tilde{\partial}')=\frac{\partial}{\partial x}$
are locally nilpotent with respective slices $s_{0}'=u$ and $s_{1}'=x$,
and respective geometric quotients $S_{0}'/\mathbb{G}_{a}=\mathrm{Spec}(\mathbb{C}[v_{0}'])$
and $S_{1}/\mathbb{G}_{a}=\mathrm{Spec}(\mathbb{C}[v_{1}])$. Since
$\omega^{-3}\in\Gamma(S_{0}'\cap S_{1},\mathcal{O}_{S}^{*})=\mathbb{C}[\omega^{\pm1}]$
belongs to $\mathrm{Ker}(\Gamma(S_{0}'\cap S_{1},\tilde{\partial}'))$,
the hypothesis of Proposition \ref{prop:Torsor-derivation} are again
satisfied. Here the corresponding scheme $Z$ is isomorphic to $\mathbb{P}^{1}$
obtained by gluing $S_{0}'/\mathbb{G}_{a}$ and $S_{1}/\mathbb{G}_{a}$
outside their respective origins $o_{0}'$ and $o_{1}$ by the isomorphism
$v_{0}'\mapsto v_{1}^{-1}$, and $\mathcal{L}^{\vee}\simeq\mathcal{O}_{Z}(3o_{0}')$.
The resulting morphism $q:S\rightarrow Z\simeq\mathbb{P}^{1}$, which
coincides with the one $(x,y,z,u)\mapsto[x:y-1]$, is thus a principal
homogeneous bundle under the geometric line bundle $L=\mathcal{O}_{\mathbb{P}^{1}}(-3)$
on $\mathbb{P}^{1}$. 
\end{example}
\bibliographystyle{amsplain}

\providecommand{\bysame}{\leavevmode\hbox to3em{\hrulefill}\thinspace}
\providecommand{\MR}{\relax\ifhmode\unskip\space\fi MR }
\providecommand{\MRhref}[2]{%
  \href{http://www.ams.org/mathscinet-getitem?mr=#1}{#2}
}
\providecommand{\href}[2]{#2}
\begin{thebibliography}{}

\end{thebibliography}


\begin{thebibliography}{99} 

\bibitem{BC85} A. Beauville, J.-L. Colliot-Th{\'e}l{\`e}ne, J.-J. Sansuc and P. Swinnerton-Dyer, \emph{Vari{\'e}t{\'e}s stablement rationnelles non rationnelles}, Annals of Mathematics, Vol. 121, No. 2 (1985), p. 283--318.

\bibitem{CD} A. Cohen and J. Draisma, \emph{From Lie algebras of vector fields to algebraic group actions} Transform. Groups 8 (2003), no. 1, 51--68. 

\bibitem{De} M. Demazure, \emph{Sous-groupes alg\'ebriques de rang     maximum du groupe de Cremona},  Ann. Sci. \'Ecole Norm. Sup. (4)3, 1970, 507--588.

\bibitem{dub13} A. Dubouloz, \emph{Flexible bundles over rigid affine surfaces},  preprint arXiv:1304.4189.


\bibitem{EGA} A. Grothendieck and J. Dieudonn{\'e},  \emph{ {\'E}l{\'e}ments de g{\'e}om{\'e}trie alg{\'e}brique. I: Le langage des sch{\'e}mas}, Publ. Math. Inst. Hautes Etud. Sci. 4 (1960).

\bibitem{Fu} W. Fulton, \newblock {\em Introduction to toric varieties}, volume 131 of {\em Annals of   Mathematics Studies}, \newblock Princeton University Press, Princeton, NJ, 1993.

\bibitem{GL} J. Goodman and A. Landman, \emph{Varieties proper over affine schemes}, Invent. Math. 20 (1973), 267--312. 


\bibitem{Ko67} E.G. Koshevoi, \emph{Birational representations of multiplicative and additive groups}, Siberian Mathematical Journal 1967, Volume 8, Issue 6, 1016--1021. 

\bibitem{KKL} F. Kutzschebauch, M. Leuenberger and A Liendo , \emph{The algebraic density property for affine toric varieties},  preprint arXiv:1402.2227.

\bibitem{Li} A. Liendo, \emph{Roots of the affine Cremona group} Transform. Groups 16 (2011), no. 4, 1137-1142. 


\bibitem{MLNotes} L. Makar-Limanov, \emph{Locally nilpotent derivations, a new ring invariant and applications}, Lecture notes, Bar-Ilan University, 1998. Avail. at http://www.math.wayne.edu/~lml/.


\bibitem{Mi68} M. Miyanishi, \emph{A remark on an iterative infinite higher derivation},  J. Math. Kyoto Univ. 8 1968 411-415.

\bibitem{MB84} L. Moret-Bailly, \emph{Vari{\'e}t{\'e}s stablement rationnelles non rationnelles}, S{\'e}minaire N. Bourbaki, 1984--1985, exp. no 643, p. 223--236.

\bibitem{Oh83} J. Ohm, \emph{The {R}uled  {R}esidue {T}heorem for {S}imple {T}ranscendental {E}xtensions of {V}alued {F}ields}, Proceedings of the American Mathematical Society Vol. 89, No. 1, 1983, p. 16--18.

\bibitem{Re68} Rentschler, R., \emph{Op\'erations de groupe additif sur le plan affine}, C.R. Acad. Sci. Paris 267 (1968), 384-387. 

\bibitem{Ro56} M. Rosenlicht, \emph{Some basic theorems on algebraic groups}, Amer. J. Math. 78 (1956), 401--443.

\bibitem{Za} D. Zaitsev, \emph{Regularization of birational group operations in the sense of Weil}, J. Lie Theory 5 (1995), no. 2, 207--224. 

\end{thebibliography}

\end{document}